\newtheorem{thm}{Theorem}[section]
\newtheorem{cor}[thm]{Corollary}
\newtheorem{lem}[thm]{Lemma}
\newtheorem{knownthm}{Theorem}[section]
\newtheorem{knownlem}[knownthm]{Lemma}
\theoremstyle{definition}
\newtheorem{knowndefinition}[knownthm]{Definition}
\newtheorem{rem}[thm]{Remark}
\newcommand{\CC}{\overline{\mathbb{C}}}
\newcommand{\C}{\mathbb{C}}
\newcommand{\D}{\mathbb{D}}
\renewcommand{\H}{\mathbb{H}}
\newcommand{\N}{\mathbb{N}}
\newcommand{\R}{\mathbb{R}}
\renewcommand{\SS}{\mathcal{S}}
\newcommand{\no}{\noindent}
\newcommand{\dstyle}{\displaystyle}
\def\Re{{\sf Re}\,}
\newcommand{\closure}{\overline}
\newcommand{\vareps}{\varepsilon}
\newcommand{\de}{\partial}
\newcommand{\hol}{\textup{Hol}}
\newcommand{\LC}{\mathtt{LC}}
\newcommand{\EF}{\mathtt{EF}}
\newcommand{\HV}{\mathtt{HV}}
\newcommand{\HF}{\mathtt{HF}}
\newcommand{\DW}{\mathtt{DW}}
\newcommand{\BP}{\mathtt{BP}}
\newcommand{\LL}{\mathcal{L}}
\newcommand{\DLC}{\mathtt{DLC}}
\newcommand{\REF}{\mathtt{REF}}
\title[Loewner chains with quasiconformal extensions]
	{Loewner chains with quasiconformal extensions: an approximation approach}
\author[I. Hotta]
	{Ikkei Hotta}
\address{Department of Applied Science, Yamaguchi University 2-16-1 Tokiwadai, Ube 755-8611, Japan}
\email{ihotta@yamaguchi-u.ac.jp}
\subjclass[2010]{Primary 30C62; Secondary 30C35, 34M30}
\keywords{evolution family; quasiconformal mapping; Loewner chain; Loewner differential equation}
\thanks{This work was supported by JSPS KAKENHI Grant Numbers 17K14205}
\date{\today}
\begin{document}

	%
	%

\begin{abstract}
A new approach in Loewner Theory proposed by Bracci, Contreras, D{\'{\i}}az-Madrigal and Gumenyuk provides a unified treatment of the radial and the chordal versions of the Loewner equations. 
In this framework, a generalized Loewner chain satisfies the differential equation
$$
\frac{\de f_{t}(z)}{\de t} = (z - \tau(t))(1-\closure{\tau(t)}z)p(z,t)\frac{\de f_{t}(z)}{\de z},
$$
where $\tau : [0,\infty) \to \closure{\D}$ is measurable and $p$ is called a Herglotz function.
In this paper, we will show that if there exists a $k \in [0,1)$ such that $p$ satisfies
$$
|p(z,t) - 1| \leq k |p(z,t) + 1|
$$
for all $z \in \D$ and almost all $t \in [0,\infty)$, then for all $t \in [0,\infty)$ $f_{t}$ has a $k$-quasiconformal extension to the whole Riemann sphere. 
The radial case ($\tau =0$) and the chordal case ($\tau=1$) have been proven by Becker [J. Reine Angew. Math. \textbf{255} (1972), 23--43] and Gumenyuk and the author [Math. Z. \textbf{285} (2017), no.3, 1063--1089].
In our theorem, no superfluous assumption is imposed on $\tau \in \closure{\D}$.
As a key foundation of the proof is an approximation method using a continuous dependence of evolution families and Loewner chains.
\end{abstract}

\maketitle

\vspace{-10pt}

\tableofcontents

	%
\section{Introduction}
	%

\subsection{Classical Loewner Theory}

Since the initial work of Loewner \cite{Loewner:1923}, the theory of the Loewner differential equations has been applied to prove several deep results in various fields of mathematics, including the complete proof of the famous Bieberbach conjecture due to de Branges \cite{deBranges:1985}. 
In 2000, Schramm \cite{Schramm:2000} introduced the celebrated Schramm-Loewner Evolution (SLE) which describes the scaling limits of various critical statistical mechanics models exhibiting conformal invariance. 
Recently, a new intrinsic approach to treat all the Loewner type equations was proposed by Bracci, Contreras, D{\'{\i}}az-Madrigal and Gumenyuk.

The \textit{(classical) radial case} is firstly introduced by L\"owner and later developed by Kufarev \cite{Kufarev:1943} and Pommerenke \cite{Pom:1965}.
Let $f_{t}(z) = e^{t}z + \sum_{n=2}^{\infty}a_{n}(t)z^{n} \,(t \geq 0)$ be a time-parameterized holomorphic function defined on the unit disk $\D := \{z \in \C : |z| < 1\}$ in the complex plane $\C$.
$(f_{t})_{t \ge 0}$ is said to be a \textit{(classical) radial Loewner chain} if for each $t \in [0,\infty)$ $f_{t}$ is univalent in $\D$ and the inclusion relation $f_{s}(\D) \subset f_{t}(\D)$ holds for all $0 \leq s < t < \infty$.
The key properties of radial Loewner chains are that $(f_t)$ is absolutely continuous on $t \in [0,\infty)$ for each $z \in \D$, which implies that $\de_t f_t \,\,(\de_t := \de / \de t) $ exists almost everywhere on $[0,\infty)$, and satisfies the partial differential equation
\begin{equation}\label{LKPDE}
\de_t f_t(z) = z\de_z f_t(z) \cdot p(z,t)
\end{equation}
for all $z \in \D$ and almost all $t \in [0, \infty)$, where $p$ is holomorphic on $z \in \D$ for each $t \in [0,\infty)$ and measurable on $t \in [0,\infty)$ for each $z \in \D$ satisfying $p(0,t) = 1$ and $\textup{Re\,} p(z,t)>0$ for all $z \in \D$ and $t \in [0,\infty)$.

\subsection{Quasiconformal extension}
In the radial case, any function $f$ belonging to $\SS$, the family of all univalent holomorphic functions $f$ on $\D$ with $f(0)=0$ and $f'(0)=1$, can be embedded in a certain radial Loewner chain $(f_t)$ as $f = f_0$ (\cite[Theorem 6.1]{Pom:1975}).
Thus, a wide spectrum of properties of the class $\SS$ can be derived via the radial Loewner equations. 
In particular, this paper focuses on quasiconformal extensions of univalent holomorphic functions.

A sense-preserving homeomorphism $f$ of a plane domain $G \subset \C$ is said to be \textit{$k$-quasiconformal} if $\de_{z} f$ and $\de_{\bar{z}} f$ in the distributional sense are locally integrable on $G$ and fulfill $|\de_{\bar{z}} f| \leq k |\de_{z} f|$ almost everywhere in $G$, where $k$ is a constant with $k \in [0,1)$.
For an introduction to the theory of quasiconformal mappings and related topics, see \cite{Ahlfors:2006}, \cite{LehtoVirtanen:1973} and \cite[Chapter 4]{ImayoshiTaniguchi:1992}.
For a given $f \in \SS$, if there exists a $k$-quasiconformal mapping $F$ of $\C$ such that $F = f$ on $\D$, then we say that \textit{$f$ has a $k$-quasiconformal extension to $\C$}.
Quasiconformal extendible univalent holomorphic functions were first treated by Bers (\cite{Bers:1961}) in connection with research on Teichm\"uller theory (see \cite{Lehto:1987}, \cite{Hubbard:2006}).
The first quasiconformal extension criterion was obtained by Ahlfors and Weill in 1962 (\cite{AhlforsWeill:1962}).
The reader is referred to \cite{Hotta:2009,Hotta:2010a} for the classical results on quasiconformal extensions of univalent functions.

In 1972, Becker \cite{Becker:1972} discovered a criterion for quasiconformal extension of $f \in \SS$ by means of the radial Loewner chain;
\textit{If $(f_{t})$ is a radial Loewner chain whose radial Herglotz function $p(z,t) := \de_{t}f_{t}(z)/z\de_{z}f_{t}(z)$ satisfies 
\begin{equation*}
\label{becker}
\left|
\frac{p(z,t)-1}{p(z,t)+1}
\right| \leq k <1 \hspace{10pt} (z \in \D, \mathrm{a.e.}~t \ge 0),
\end{equation*} 
then $f_{0}$ has a $k$-quasiconformal extension to $\C$} (see also \cite{Becker:1980}). The chordal variant of this theorem is obtained by Gumenyuk and the author \cite{HottaGumenyuk}.

In 1992, Betker \cite{Betker:1992} generalized Becker's theorem that, if there are two radial Herglotz functions $p$ and $q$ that satisfy
$$
\left|
\frac{p(z,t) - \closure{q(z,t)}}{p(z,t)+q(z,t)}
\right| \leq k<1 \hspace{10pt} (z \in \D, \mathrm{a.e.}~t \ge 0),
$$
then a Loewner chain associated with $p$ and an inverse Loewner chain (see \cite{Betker:1992} for detail) associated with $q$ defines a $k$-quasiconformal automorphisms of $\C$ whose restriction to $\D$ is $f_{0}$.

\subsection{Main results}

The main aim of this paper is to discuss quasiconformal extension problems on the general setting of Loewner Theory proposed in \cite{BracciCD:evolutionI} and \cite{MR2789373}.
In this framework, the following generalized Loewner chains are dealt with.
\begin{knowndefinition}[{\cite[Definition 1.2]{MR2789373}}]
		\label{defLoewner}
A family of holomorphic maps $(f_{t})_{t \geq 0}$ of the unit disk $\D$ is called a \textit{Loewner chain} if
\def\labelenumi{LC\arabic{enumi}.}
\begin{enumerate}
\item $f_{t} :\D\to\C$ is univalent for each $t \in [0,\infty)$;
\item $f_{s}(\D) \subset f_{t}(\D)$ for all $0 \leq s < t < \infty$;
\item for any compact set $K \subset \D$ and all $T>0$, there exists a non-negative locally integrable function $k_{K,T} : [0,T] \to \R$ such that
\begin{equation*}
|f_{s}(z) - f_{t}(z)| \leq \int_{s}^{t} k_{K,T}(\zeta) d\zeta
\end{equation*}
for all $z \in K$ and all $0 \leq s \leq t \leq T$.
\end{enumerate}
Furthermore, a Loewner chain is said to be \textit{normalized} if $f_{0} \in \SS$.
\end{knowndefinition}

It is known \cite{MR2789373} that a Loewner chain $(f_{t})$ in the above sense satisfies the differential equation
\begin{equation}
\label{opLDE}
\de_{t}f_{t}(z) = (z - \tau(t))(1-\closure{\tau(t)}z)\de_{z}f_{t}(z) \cdot p(z,t)\hspace{15pt}(z \in \D,\,\textrm{a.e.}~t \ge 0),
\end{equation}
where $\tau : [0,\infty) \to \closure{\D}$ is a measurable function and $p$ is called a \textit{Herglotz function} (Definition \ref{HFdef}).
Conversely, for a given $\tau$ and $p$, the ordinary differential equation
$$
\frac{\mathrm{d}\omega_{z,s} (t)}{\mathrm{d}t} = (\omega_{z,s}(t) - \tau(t))(\closure{\tau(t)}\omega_{z,s}(t) -1)\cdot p(\omega_{z,s}(t),t)\hspace{15pt}(\textrm{a.e.}~t \ge s)
$$
with the initial condition $\omega_{z,s} (s) =z$ has a unique solution $\omega_{z,s}(t)$.
Let $\varphi_{s,t}(z) :=w_{z,s}(t)$ for all $0 \le s \le t < \infty$ and all $z \in \D$, then the family of two-parametrized holomorphic functions $(\varphi_{s,t}(z))_{0 \le s \le t < \infty}$ on $z \in \D$ generates a Loewner chain $(f_{t})$  that fulfills $f_{s} = f_{t} \circ \varphi_{s,t}$ for all $0 \le s \le t < \infty$.
If we further assume that $(f_{t})$ is range-normalized (see Section \ref{2.3}), then such a chain $(f_{t})$ is determined uniquely.

In this paper, the following general quasiconformal extension criterion for Loewner chains is proven.
Some of the terminology below will be defined in more detail later on in the paper.
We emphasize that our theorem imposes no superfluous assumptions on $\tau$.

\begin{thm}\label{main01}
Let $k \in [0,1)$.
Let $(f_{t})$ be a Loewner chain and $(p, \tau)$ be the Berkson-Porta data associated with $(f_t)$.
We denote by $T \in [0,\infty]$ the smallest number such that $p(\D,t) \in i\R$ for almost all $t \in (T, \infty)$.
Suppose that 
\begin{enumerate}
\item $T > 0$;
\item There exists a compact subset of $\H :=\{w \in \C : \Re w > 0\}$ that contains $p(\D, t)$ for almost all $t \in [0,T)$;
\item $p$ satisfies
\begin{equation}\label{maininequality}
|p(z,t) - \closure{q(z,t)}|
\leq k \cdot |p(z,t) + q(z,t)|
\end{equation}
for all $z \in \D$ and almost all $t \in [0,T)$, where $q$ is a Herglotz function.
\end{enumerate}
Let $(g_{t})$ be a decreasing Loewner chain associated uniquely with $(q, \tau)$.
Then for each $t \in [0, T)$, $f_{t}$ and $g_{t}$ has continuous extensions to $\closure{\D}$.
Further, $\Phi$ defined by 
\begin{equation}\label{Phi}
\left\{
\begin{array}{lll}
\dstyle \Phi(z) := f_{0}(z), & z \in \D,\\
\dstyle \Phi\left(\frac{1}{\closure{g_{t}(e^{i\theta})}}\right) := f_{t}(e^{i\theta}), &\theta \in [0,2\pi)\hspace{5pt}\textup{and}\hspace{5pt}t \in [0,T),
\end{array}
\right.
\end{equation}
is a $k$-quasiconformal mapping on $\left\{1/\bar{w}: w \in \CC\backslash\bigcap_{t \geq 0} \closure{g_t(\D)}\right\}$ onto $\bigcup_{t \ge 0}f_{t}(\D)$.
\end{thm}

\begin{rem}
\label{rem02}
The inequality \eqref{maininequality} implies that for a fixed $(z_{0}, t_{0})  \in \D\times [0,T)$, $p(z_{0}, t_{0})$ lies on a circle of Apollonius on $\H$ with foci $\closure{q(z_{0},t_{0})}$ and $-q(z_{0}, t_{0})$, symmetric w.r.t. the imaginary axis.
Hence, $q(\D,t)$ is also bounded for almost all $t \in [0,T)$.
\end{rem}

As a corollary of the above theorem, we obtain a generalization of Becker's criterion. Note that recently another simple proof of the next theorem relying on the technique of holomorphic motions and the optimal lambda-lemma is provided by Gumenyuk and Prause \cite{GumenyukPrause:2018}.

\begin{thm}
\label{openingthm}
Let $k \in [0, 1)$.
Let $(f_t)$ be a Loewner chain and $p$ be the Herglotz function associated with $(f_t)$ in \eqref{opLDE}.
Suppose that $p$ satisfies
\begin{equation*}
\label{openingineq}
\left|
\frac{p(z,t)-1}{p(z,t)+1}
\right| \leq k
\end{equation*}
for all $z \in \D$ and almost all $t \in [0,\infty)$. Then:
\begin{enumerate}
\item[(i)] for each $t \in [0,\infty)$, $f_{t}$ has a $k$-quasiconformal extension to $\CC$;
\item[(ii)] for each $s \in [0,\infty)$ and $t \in [s,\infty)$, $\varphi_{s,t}:= f_{t}^{-1} \circ f_{s}$ has a $k$-quasiconformal extension to $\CC$;
\item[(iii)] $\bigcup_{t \ge 0}f_{t}(\D) = \C$.
\end{enumerate}
\end{thm}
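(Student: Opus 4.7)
The plan is to establish Theorem \ref{openingthm} by approximating the measurable driving function $\tau$ by piecewise constant step functions, reducing each piece of the approximated equation to the already-known radial or chordal case, and then passing to the limit through the continuous dependence of evolution families set up in the preliminaries.

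First, I would approximate $\tau:[0,\infty)\to\closure{\D}$ by a sequence of step functions $\tau_n$ converging to $\tau$ almost everywhere, and by a further refinement arrange that every value of $\tau_n$ lies in $\{0\}\cup\partial\D$. Since $p$ is left unchanged, the pair $(\tau_n,p)$ still satisfies the Becker-type bound $|(p-1)/(p+1)|\le k$ and so determines its own evolution family $(\varphi^{(n)}_{s,t})$ and Loewner chain $(f^{(n)}_t)$.

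For each fixed $n$, I would verify that $\varphi^{(n)}_{s,t}$ admits a $k$-quasiconformal extension to $\CC$. On every subinterval where $\tau_n\equiv\tau_0$ is constant, conjugation by a disk automorphism $g$ sending $\tau_0$ to $0$ (if $\tau_0\in\D$) or to $1$ (if $\tau_0\in\partial\D$) converts the Loewner equation into a radial or chordal one whose transformed Herglotz function $\tilde p(w,t)=p(g^{-1}(w),t)$ still obeys the Becker bound, since that bound is a condition on the values of $p$ rather than on its argument. Theorem \ref{Beckerthm} or its chordal counterpart from \cite{HottaGumenyuk} then supplies a $k$-quasiconformal extension of the flow on that subinterval. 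The delicate point is that the naive composition of two $k$-quasiconformal maps is only $\bigl(2k/(1+k^2)\bigr)$-quasiconformal, so one cannot simply concatenate the pieces. I would circumvent this by realising the extensions pointwise through Becker's explicit time-flow description: the complex dilatation at a point $z\in\CC\setminus\D$ is expressible in terms of $(p-1)/(p+1)$ evaluated along the trajectory of the flow, and is therefore bounded uniformly by $k$ across the entire interval $[s,t]$, yielding a genuinely $k$-quasiconformal extension of $\varphi^{(n)}_{s,t}$ rather than a composed one.

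The final step is to pass to the limit $n\to\infty$. By the continuous dependence of evolution families on $(\tau,p)$ recalled earlier in the paper, $\varphi^{(n)}_{s,t}\to\varphi_{s,t}$ locally uniformly in $\D$. After a suitable three-point normalisation on the sphere, the family of $k$-quasiconformal self-maps of $\CC$ is sequentially compact in the topology of locally uniform convergence, so a subsequence of the extensions converges to a $k$-quasiconformal extension of $\varphi_{s,t}$, proving (ii). For (iii), the Becker bound gives $\Re p\ge(1-k)/(1+k)$, forcing $f_t(\D)$ to grow exponentially in a standard sense and hence to exhaust $\C$. Part (i) then follows from (ii) and (iii) by applying the same time-flow construction to $f_t$ itself, using the exhaustion to cover $\CC$ and the same pointwise Beltrami bound to preserve the constant $k$. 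The principal obstacle throughout is precisely the gluing step inside each approximation: the constant $k$ must survive the passage from the piecewise constructions to the global flow, and this is exactly where Becker's explicit extension, traceable across the whole time interval, is indispensable.
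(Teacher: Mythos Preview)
Your approximation strategy---step functions for $\tau$, then compactness of $k$-quasiconformal maps---matches the paper's architecture. Two points, however, need attention.

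First, the gluing. You correctly identify that naive composition loses the constant $k$, and propose to cure this via the pointwise Becker dilatation formula. But you do not say \emph{how} the exterior of $\D$ is to be parametrised by the time flow once $\tau$ jumps between pieces; the radial parametrisation $z=e^{t}e^{i\theta}$ and the chordal one are incompatible across a jump, so ``the trajectory of the flow'' is undefined. The paper's device is an auxiliary \emph{decreasing} Loewner chain $(g_t)$ driven by the same $\tau$ but with Herglotz function $q\equiv 1$: the single global formula $\Phi\bigl(1/\closure{g_t(e^{i\theta})}\bigr)=f_t(e^{i\theta})$ parametrises the exterior coherently across all pieces, and the dilatation at each point is computed to be $|(p-\bar q)/(p+q)|=|(p-1)/(p+1)|\le k$ (Theorem~\ref{main01} and its step-function precursor Theorem~\ref{stepfunction}). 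Without something playing the role of $(g_t)$, your gluing remains a sketch.

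Second, and more seriously, your argument for (iii) is a genuine gap. The implication ``$\Re p\ge c>0$ uniformly $\Rightarrow \Omega[(f_t)]=\C$'' is established in the paper only for \emph{constant} $\tau$ (Lemma~\ref{loewnerrange}) and is explicitly left open for general measurable $\tau$ in the Problem at the end of Section~\ref{Section4}. Nor can one recover (iii) from the step-function approximants: the paper's example $f_t^n(z)=e^{t/n}z$ shows that $\Omega=\C$ is not stable under the limit. The paper instead obtains (iii) as a byproduct of the decreasing-chain construction: Lemma~\ref{(1,tau)} shows $\Lambda[(g_t)]$ is a single point, so $\Phi$ is a homeomorphism of $\CC$ minus a point onto $\Omega[(f_t)]$, forcing the latter to be $\C$. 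Your order of deductions---prove (ii), then (iii) by growth, then (i)---therefore cannot close; the paper proves (i) and (iii) together via Theorem~\ref{main01}, and derives (ii) afterwards by embedding $\varphi_{s,t}$ as the initial element of a new Loewner chain.

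(A minor slip: one cannot refine $\tau_n$ to take values in $\{0\}\cup\partial\D$ while still approximating, say, $\tau\equiv\tfrac12$; but your subsequent conjugation handles arbitrary $\tau_0\in\closure{\D}$, so this was not essential.)
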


For proving Theorem \ref{main01}, we introduce an approximation method for Loewner chains.
A main part of the statement of the following theorem and the proof (Lemma \ref{HVweaktoEF} in Section 3) come from \cite[Lemma I.37]{Roth:1998}. 
Similar results are obtained under more restrictive situations (see e.g., \cite[Section 4.7]{Lawler:2005}, \cite[Proposition 1]{Viklund2012} and \cite[Theorem 2.4]{RothSchleissinger:2014}).

\begin{thm}
Let $\tau_{n} : [0,\infty) \to \closure{\D}$ be a sequence of measurable functions and $p_{n}$ a sequence of Herglotz functions.
Suppose that $G_{n}(z,t) := (z - \tau_{n}(t))(\closure{\tau_{n}(t)}z-1)  p_{n}(z,t)$ has the following properties;
\begin{enumerate}
\item For all $z \in \D$, $G_{n}$ has a weak limit $G$ of the form $G(z,t) := (z - \tau(t))(\closure{\tau(t)}z-1) p(z,t),$ where $\tau: [0,\infty) \to \closure{\D}$ and $p$ are again a measurable function and a Herglotz function; 
\item For all $T >0$, $\{G_{n}(\,\cdot\,, t) : n \in \N, \mathrm{a.e.}~t \in [0,T]\}$ forms a normal family.
\end{enumerate}
Let $\omega_{z,s}$ be the unique solution of the ordinary differential equation
$$
\left\{
\begin{array}{clll}
\dstyle\frac{\mathrm{d}\omega_{z,s} (t)}{\mathrm{d}t} &=& G(\omega_{z,s}(t), t), &\textit{for almost all } t \ge s\\[8pt]
\omega_{z,s}(s) &=& z, & t=s
\end{array}
\right.
$$
and $\omega_{z,s}^{n}$ the unique solution of the above ODE by $G_{n}$ in the same fashion. 
Let $\varphi_{s,t}(z) := \omega_{z,s}(t)$ and $\varphi_{s,t}^{n}(z) := \omega_{z,s}^{n}(t)$ for all $0 \le s \le t < \infty$ and all $z \in \D$.
Then $(\varphi_{s,t}^{n})_{0 \le s \le t < \infty}$ converges to $(\varphi_{s,t})_{0 \le s \le t < \infty}$ locally uniformly on $(z,t) \in \D \times [s,\infty)$ as $n \to \infty$.

Further, let $(f_{t}^{n})$ and $(f_{t})$ be range-normalized Loewner chains associated uniquely with $(\varphi_{s,t}^{n})$ and $(\varphi_{s,t})$, respectively.
Then $(f_{t}^{n})$ converges locally uniformly to $(f_{t})$ on $(z,t) \in \D \times [0,\infty)$ as $n \to \infty$.
\end{thm}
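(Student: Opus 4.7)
The strategy is a Carathéodory-type limit theorem: pass to a subsequential uniform limit via Arzelà--Ascoli, verify it satisfies the limit ODE, and eliminate the subsequence by uniqueness. Fix $s \geq 0$, $z \in \D$, $T > s$, and write $\omega_n := \omega_{z,s}^{n}$. Each $\varphi_{s,t}^{n}$ is a self-map of $\D$, so the $\omega_n$ stay in $\D$; classical Herglotz bounds together with hypothesis~(2) provide a bound $|G_n(\zeta,u)| \leq C(K)$ on every compact $K \Subset \D$, uniformly in $n$ and $u$. Feeding this into the integral form $\omega_n(t) = z + \int_s^t G_n(\omega_n(u),u)\,du$, valid so long as the trajectories remain in a fixed compact, yields a uniform Lipschitz bound, and Arzelà--Ascoli supplies a subsequential limit $\tilde\omega$ locally uniform in $t$.

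\textbf{The key step} is to check that $\tilde\omega$ satisfies the limit ODE. I would split
\[
\int_s^t G_n(\omega_n,u)\,du - \int_s^t G(\tilde\omega,u)\,du = \int_s^t \bigl[G_n(\omega_n,u)-G_n(\tilde\omega,u)\bigr]\,du + \int_s^t \bigl[G_n(\tilde\omega,u)-G(\tilde\omega,u)\bigr]\,du.
\]
The first summand vanishes as $n\to\infty$: hypothesis~(2) and the uniform bound above give, via Cauchy's estimate, an equi-Lipschitz bound on $\{G_n(\cdot,u)\}_n$ on compacts of $\D$, while $\omega_n \to \tilde\omega$ uniformly on $[s,T]$. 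Hypothesis~(1) I read as weak $L^1_{\text{loc}}$-convergence $G_n(z,\cdot) \rightharpoonup G(z,\cdot)$ for each fixed $z$, so that $\int_s^t G_n(z,u)\,du \to \int_s^t G(z,u)\,du$ pointwise in $z$. For the second summand, I would approximate $\tilde\omega$ uniformly on $[s,T]$ by a step function $\sum_{j=1}^{N} z_j \mathbf{1}_{I_j}$, apply the weak-convergence statement on each of the finitely many intervals $I_j$, and absorb the discretization error into the equi-Lipschitz bound. Uniqueness of Carathéodory solutions then forces $\tilde\omega = \omega_{z,s}$, so the full sequence converges at every $z$. Normality of univalent self-maps of $\D$ upgrades pointwise to locally uniform convergence in $z$, and the Lipschitz bound in $t$ combines both into joint local uniformity on $\D \times [s,\infty)$.

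\textbf{Part 2 (Loewner chains).} Since $f_0^{n} \in \SS$, the growth theorem bounds $\{f_0^{n}\}$ locally uniformly on $\D$, and the identity $f_s^{n} = f_t^{n} \circ \varphi_{s,t}^{n}$ together with the already-established convergence $\varphi_{s,t}^{n} \to \varphi_{s,t}$ propagates this into local uniform boundedness of $(f_t^{n})$ on $\D \times [0,T]$ for every $T>0$. A diagonal argument, with equicontinuity in $t$ coming from axiom LC3, extracts a subsequential limit $(g_t)$ converging locally uniformly on $\D \times [0,\infty)$. Hurwitz preserves univalence, set-theoretic limits give the inclusion of ranges, and Fatou transfers the LC3 integral bound; passing to the limit in the chain identity identifies the evolution family of $(g_t)$ as $(\varphi_{s,t})$; and $g_0 = \lim f_0^{n} \in \SS$ supplies the range-normalization. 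Theorem~\ref{LCunique} then forces $(g_t) = (f_t)$, and uniqueness of the subsequential limit yields convergence of the entire sequence.

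\textbf{Main obstacle.} The most delicate point is the limit passage for $\int G_n(\tilde\omega,u)\,du$ under only pointwise-in-$z$, weak-in-$t$ convergence: the trajectory $\tilde\omega$ itself varies with $u$, while each weak-convergence statement freezes a single $z$. The step-function approximation combined with the equi-Lipschitz bound supplied by hypothesis~(2) reconciles the two, but only if the mesh is fixed \emph{before} sending $n\to\infty$, so that only finitely many pointwise weak limits are invoked at once.
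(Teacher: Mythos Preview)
Your Part~1 takes a different route from the paper: you argue via Arzel\`a--Ascoli compactness and uniqueness of the limit ODE, whereas the paper compares $\varphi_{s,t}^n$ directly to the already-known solution $\varphi_{s,t}$ of the limit equation by Gronwall's inequality (Lemma~\ref{gronwall}), coupled with a continuation argument guaranteeing that the approximate trajectories remain in a fixed compact neighbourhood $B_{K,T}$ of the limit orbit $O_{K,T}:=\{\varphi_{s,t}(z):z\in K,\ t\in[s,T]\}$. Your step-function device for the weak-convergence term is sound, and the paper reaches the same conclusion by proving equicontinuity of $n\mapsto\int_s^t G_n(\varphi_{s,u}(z),u)\,du$ and then applying Gronwall. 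What you have \emph{not} closed is the clause ``valid so long as the trajectories remain in a fixed compact'': the bound $|G_n(\zeta,u)|\le C(K)$ holds only for $\zeta\in K\Subset\D$, and nothing you wrote prevents $\omega_n(t)$ from drifting toward $\partial\D$ as $n\to\infty$ for some $t\in(s,T]$, in which case equicontinuity fails and Arzel\`a--Ascoli does not apply on all of $[s,T]$. Filling this gap requires precisely the continuation argument the paper carries out.

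Part~2 has two genuine gaps. First, the chain identity $f_s^n = f_t^n\circ\varphi_{s,t}^n$ runs the wrong direction for your purpose: it expresses $f_s^n$ through $f_t^n$ for $t\ge s$, so boundedness of $f_0^n$ controls $f_t^n$ only on the shrinking image $\varphi_{0,t}^n(\D)$, not on all of $\D$. Second, $g_0\in\SS$ is merely \emph{normalized} in the sense of Definition~\ref{defLoewner}; \emph{range-normalized} additionally requires $\Omega[(g_t)]$ to be $\C$ or a Euclidean disk centered at the origin, and when $\Omega\neq\C$ Theorem~\ref{LCunique} shows there are many normalized chains in $\LL[(\varphi_{s,t})]$, so uniqueness does not identify your subsequential limit. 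The paper sidesteps both issues by invoking the explicit construction of the range-normalized chain from Lemma~\ref{LCunique02} and \eqref{LClimit}: one has $f_s(z)=\lim_{t\to\infty}(L_t\circ\varphi_{s,t})(z)$ for M\"obius maps $L_t$ whose coefficients are built from $\varphi_{0,t}(0)$ and $\varphi_{0,t}'(0)$; since these converge by Part~1, so do $L_t^n$, and the convergence $f_s^n\to f_s$ follows directly.
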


This paper is structured as follows:
In Section \ref{Section2}, we collect preliminary results that are used throughout the later discussions. 
In Section \ref{Section3}, we prove the quasiconformal extension theorem (Theorem \ref{main01}) with Loewner chains and decreasing Loewner chains (Definition \ref{def_decreasing}). 
The proof is divided into three steps where $\tau$ is a constant, a step function, and a measurable function on $\closure{\D}$, respectively.
An approximation method for Loewner chains is also provided in this section.
In Section \ref{Section4}, we verify Theorem \ref{openingthm} and further results of quasiconformal extensions that are corollaries of the theorem in Section 3. We conclude Section \ref{Section4} and this paper with a brief consideration of the Loewner Range $\bigcup_{t \ge 0} f_{t}(\D)$.

\

\no
\textbf{Acknowledgement}: The author would like to express his deepest gratitude to Professor Oliver Roth and Doctor Sebastian Schlei{\ss}inger for their fruitful discussions and suggestions on this paper.
The author is deeply grateful to Professor Pavel Gumenyuk who gave him continuous encouragement and valuable advice.
He would also like to thank the anonymous referee for thorough reading of the manuscript and helpful comments.
Part of this work was done while the author was a guest researcher at the University of W\"urzburg.

	%
\section{Preliminaries}\label{Section2}

\subsection{Semigroups of holomorphic mappings}

Let $D \subset \C$ be a simply connected domain.
We denote the family of all holomorphic functions on $D$ by $\hol(D,\C)$.
If $f \in \hol(\D, \C)$ is a self-mapping of $\D$, then we denote the family of such functions by $\hol(\D)$.

A family $\{\phi_t\}_{t \geq 0}$ of holomorphic self-maps of $\D$ is called a \textit{one-parameter \textup{(}continuous\textup{)} semigroup} if 
\begin{itemize}
\item $\phi_0 = id_{\D}$;
\item $\phi_{s+t} = \phi_t \circ \phi_s$ for all $s, t \in [0, \infty)$;
\item $\lim_{t \to s} \phi_t (z) = \phi_s (z)$ for all $s \in [0,\infty)$ and $z \in \D$;
\item $\lim_{t \to 0^+} \phi_t (z) = z$ locally uniformly on $\D$.
\end{itemize}

It is well-known (see e.g. \cite{ElinShoikhet:2010}) that for a semigroup $\phi_t$ there exists a holomorphic function $G \in \hol(\D, \C)$ such that $\phi_t$ is the unique solution of the ordinary differential equation
\begin{equation*}\label{cauchy_semigroup}
\dstyle \frac{{\rm d}\phi_t(z)}{{\rm d}t} = G(\phi_t(z))\hspace{20pt}(t \ge 0)
\end{equation*}
with the initial condition $\phi_0(z) = z$.
The above function $G$ is called an \textit{infinitesimal generator} of a semigroup.
Various criteria which guarantee that a function $G \in \hol(\D, \C)$ is an infinitesimal generator are known.
As one of them, in 1978 Berkson and Porta \cite{BerksonPorta:1978} showed that a holomorphic function $G \in \hol(\D, \C)$ is an infinitesimal generator if and only if there exists a $\tau \in \closure{\D}$ and a function $p \in \hol(\D, \C)$ with $\Re p(z) \geq 0$ for all $z \in \D$ such that
\begin{equation}\label{BPformula}
G(z) = (\tau-z)(1-\bar{\tau} z)p(z)
\end{equation}
for all $z \in \D$. 
\eqref{BPformula} is called the \textit{Berkson-Porta representation}.

\subsection{Evolution families and Herglotz vector fields}

We introduce an evolution family, a core notion in modern Loewner Theory.

\begin{knowndefinition}[{\cite[Definition 3.1]{BracciCD:evolutionI}}]
A family of holomorphic self-maps of the unit disk $(\varphi_{s,t}),\,0 \leq s \leq t < \infty$, is an \textit{evolution family} if
\begin{enumerate}
\def\labelenumi{EF\arabic{enumi}.}
\item $\varphi_{s,s}(z) = z$;
\item $\varphi_{s,t} = \varphi_{u,t}\circ \varphi_{s,u}$ for all $0 \leq s \leq u \leq t < \infty$;
\item for all $z \in \D$ and for all $T>0$ there exists a
locally integrable function ${k_{z,T}\colon[0,+\infty)\to[0,+\infty)}$ such that
\[
|\varphi_{s,u}(z)-\varphi_{s,t}(z)|\leq\int_{u}^{t}k_{z,T}(\xi)d\xi
\]
whenever $0\leq s\leq u\leq t<\infty.$
\end{enumerate}
\end{knowndefinition}

\begin{rem}
In~\cite{BracciCD:evolutionI} and \cite{MR2789373}, an evolution family and a Loewner chain are defined with an integrability order $d\in[1,+\infty]$. 
Since this parameter is not important for the discussions in this paper, we assume that $d=1$ which is the most general case of the order.
\end{rem}

We denote the family of all evolution families by $\EF$.
For all $0 \leq s \leq t < \infty$, $\varphi_{s,t}$ is univalent on $\D$ {\cite[Corollary 6.3]{BracciCD:evolutionI}.

\begin{knowndefinition}[{\cite[Definition 4.1, Definition 4.3]{BracciCD:evolutionI}}]
\label{WVdef} 
A function $G : \D \times [0,\infty) \to \C$ is said to be a \textit{Herglotz vector field} and denoted by $G \in \HV$ if it satisfies the following three conditions:
\def\labelenumi{HV\arabic{enumi}.}
\begin{enumerate}
\item for all $z \in \D$, the function $G(z,\,\cdot\,)$ is measurable on $t \in [0,\infty)$;
\item for any compact set $K \subset \D$ and for all $T>0$, there exists a non-negative locally integrable function $k_{K,T} : [0,T] \to\R$ such that
\begin{equation*}
\label{WVinequality}
|G(z,t)| \leq k_{K,T}(t)
\end{equation*}
for all $z \in K$ and for almost every $t \in [0,T]$;
\item $G(\,\cdot\,, t)$ is an infinitesimal generator of a semigroup of holomorphic functions for almost all $t \in [0,\infty)$.
\end{enumerate}
\end{knowndefinition}

There is one-to-one essentially unique correspondence between evolution families and Herglotz vector fields; 
for any $(\varphi_{s,t}) \in \EF$, there exists an essentially unique $G \in \HV$ such that
\begin{equation}
\label{EFtoHV}
\frac{{\rm d}\varphi_{s,t}(z)}{{\rm d}t} = G(\varphi_{s,t}(z), t)
\end{equation}
for all $z \in \D$ and almost all $t \in [0,\infty)$.
Conversely, for any $G \in \HV$, the family of unique solutions of \eqref{EFtoHV} with the initial condition $\varphi_{s,s}(z) =z$ is an evolution family.
Here, \textit{essentially unique} means that if $G^{*}(z,t)$ is another Herglotz vector field which satisfies \eqref{EFtoHV}, then $G(\,\cdot\,,t) = G^{*}(\,\cdot\,,t)$ for almost every $t \ge 0$.

\begin{knowndefinition}[{\cite[Definition 4.5]{BracciCD:evolutionI}}]
			\label{HFdef}
\textit{A Herglotz function} on the unit disk $\D$ is a function $p : \D \times [0,\infty) \to \C$ with the following properties:
\def\labelenumi{HF\arabic{enumi}.}
\begin{enumerate}
\item for all fixed $z \in \D$, the function $p(\,\cdot\,,t)$ is locally integrable on $[0,\infty)$ for all $z\in\D$;
\item for all fixed $t \in [0,\infty)$, the function $p(z,\,\cdot\,)$ is holomorphic on $\D$;
\item $\Re p(z,t) \geq 0$ for all $z \in \D$ and $t \in [0,\infty)$. 
\end{enumerate}
\end{knowndefinition}

\no
$\HF$ stands for the family of all Herglotz functions.

A mutual relation also holds between Herglotz vector fields and the Berkson-Porta datas; 
for any $G \in \HV$, there exist a measurable function $\tau : [0,\infty) \to \closure{\D}$ and $p \in \HF$ such that
\begin{equation}\label{HVtoBP}
G(z,t) = (z-\tau(t)) (\closure{\tau(t)} z -1) p(z,t)
\end{equation}
for all $z \in \D$ and almost all $t \in [0,\infty)$.
Conversely, for a given measurable function $\tau : [0,\infty) \to \closure{\D}$ and $p \in \HF$, equation \eqref{HVtoBP} forms a Herglotz vector field. 
For our convenience, in this paper we call the above measurable function $\tau : [0,\infty) \to \closure{\D}$ the \textit{Denjoy-Wolff function}, and denote by $ \tau \in \DW$.
A pair $(p, \tau)$ of $p \in \HF$ and $\tau \in \DW$ is called the \textit{Berkson-Porta data} for $(\varphi_{s,t})$, and denoted by $(p, \tau) \in \BP$.
If two $(p,\tau) ,(\tilde{p},\tilde{\tau}) \in \BP$ generate the same $G \in \HV$ up to a set of measure zero, then $p = \tilde{p}$ for all $z \in \D$ and almost all $t \in [0,\infty)$ and $\tau = \tilde{\tau}$ for almost all $[t,\infty)$ such that $G(\,\cdot\,,t) \not\equiv 0$ (see \cite[Theorem 4.8]{BracciCD:evolutionI}).

In summary, there is a one-to-one correspondence among an evolution family $(\varphi_{s,t}) \in \EF$, a Herglotz vector fields $G \in \HV$ and the Berkson-Porta data $(p,\tau) \in \BP$. 
In particular, the relation of $(\varphi_{s,t})$ and $(p,\tau)$ are expressed by the ordinary differential equation
\begin{equation}\label{evolution}
\frac{{\rm d}\varphi_{s,t}(z)}{{\rm d}t}= (\varphi_{s,t}(z) - \tau(t))(\closure{\tau(t)}\varphi_{s,t}(z)-1) p(\varphi_{s,t}(z),t)\hspace{15pt}(z \in \D,\,\textrm{a.e.}~t \ge s)
\end{equation}
with the initial condition $\varphi_{s,s}(z) = z$.

\subsection{Generalized Loewner chains and decreasing Loewner chains}

\label{2.3}

By Definition \ref{defLoewner}, the notion of Loewner chains is lifted to the same framework as evolution families.
The family of all Loewner chains is denoted by $\LC$.

For a given $(f_{t}) \in \LC$, the equation $\varphi_{s,t} := f_{t}^{-1} \circ f_{s}$ defines an evolution family.
Differentiating both sides with respect to $t$, we obtain $\de_{z}f_{t} \cdot \de_{t}\varphi_{s,t} + \de_{t} f_{t} = 0$. 
Taking into account of \eqref{evolution} we have the following generalized Loewner-Kufarev PDE
\begin{equation}
\label{LdLDE}
\de_{t}f_{t}(z) = (z-\tau(t))(1-\closure{\tau(t)}z)\de_{z}f_{t}(z)p(z,t).
\end{equation}

On the other hand, for a given evolution family, the relation $f_{t} \circ \varphi_{s,t} = f_{s}$ does not define a unique Loewner chain.
That is, it is not always true that $\LL[(\varphi_{s,t})]$, the family of all normalized\footnote{Recall that $(f_{t}) \in \LC$ is \textit{normalized} if $f_{0} \in \SS$ (Definition \ref{defLoewner}).} Loewner chains associated with $(\varphi_{s,t}) \in \EF$, consists of one function.
However, $\LL[(\varphi_{s,t})]$ always includes a certain Loewner chain and in this sense $(f_t)$ is determined uniquely.
For the detail, see {\cite[Theorem 1.6, Theorem 1.7]{MR2789373}}.
The uniquely determined Loewner chain $(f_{t})$ in such a way is called \textit{range-normalized} and denoted by $(f_{t}) \in \LC_{0}$.
It should be noted here that the \textit{Loewner range} defined by 
\begin{equation*}
\label{loewner-range-def}
\Omega[(f_{t})] := \bigcup_{t \ge 0}f_{t}(\D).
\end{equation*}
is concerned to the uniqueness of $(f_{t})$.

In Section \ref{Section3}, we will use the decreasing setting of evolution families and Loewner chains.

\begin{knowndefinition}[{\cite[Definition 1.6]{contreraslocalduality}}]
\label{def_decreasing}
A family $(g_{t})_{t \geq 0}$ of holomorphic maps of the unit disk $\D$ is called a \textit{decreasing Loewner chain} if it satisfies the following conditions:
\def\labelenumi{DLC\arabic{enumi}.}
\begin{enumerate}
\item $g_{t} $ is univalent on $\D$ for each $t \in [0, \infty)$;
\item $g_{0}(z) = z$ and $g_{s}(\D) \supset g_{t}(\D)$ for all $0 \leq s < t < \infty$;
\item for any compact set $K \subset \D$ and all $T>0$, there exists a 
locally integrable function ${k_{K,T}\colon[0,+\infty)\to[0,+\infty)}$ such that
\begin{equation*}
\label{Lddecreasingineq}
|g_{s}(z) - g_{t}(z)| \leq \int_{s}^{t} k_{K,T}(\zeta) d\zeta
\end{equation*}
for all $z \in K$ and all $0 \leq s \leq t \leq T$.
\end{enumerate}
\end{knowndefinition}

Accordingly, the decreasing counterpart of evolution families is also defined.

\begin{knowndefinition}[{\cite[Definition 1.9]{contreraslocalduality}}]
A family $(\omega_{s,t})_{0 \le s \le t}$ of holomorphic self-maps of the unit disk $\D$ is called a \textit{reverse evolution family} if the following conditions are fulfilled:
\def\labelenumi{REF\arabic{enumi}.}
\begin{enumerate}
\item $\omega_{s,s}(z) = z$;
\item $\omega_{s,t} = \omega_{s,u} \circ \omega_{u,t}$ for all $0 \le s \le u \le t < \infty$;
\item for all $z \in \D$ and for all $T>0$ there exists
 a non-negative locally integrable function ${k_{z,T}\colon[0,+\infty)\to[0,+\infty)}$ such that
$$
|\omega_{s,u}(z) - \omega_{s,t}(z)| \leq \int_{u}^{t} k_{z, T}(\zeta) d\zeta
$$
for all $0 \leq s \leq u \leq t \leq T$.
\end{enumerate}
\end{knowndefinition}

$\DLC$ and $\REF$ denote the families of all decreasing Loewner chains and reverse evolution families, respectively.

The followings state the mutual relations between decreasing Loewner chains, reverse evolution families and Herglotz vector fields:
\begin{equation}
\label{uniqueDLE}
\omega_{s,t} = g_s^{-1} \circ g_t\hspace{15pt} (0 \leq s \leq t < \infty)
\end{equation}
and
\begin{equation}
\label{reversetoHVF}
\frac{{\rm d}\omega_{s,t}(z)}{{\rm d}s} = -G(\omega_{s,t}(z),s),\,\, s \in [0,t],\,\,\omega_{s,s}(z) = z.
\end{equation}
For the detail, see \cite[Theorem 4.1, Theorem 4.2]{contreraslocalduality}.
Thus, $(\omega_{s_t}) \in \REF$ and $(g_t) \in \DLC$ satisfy the following differential equations
\begin{equation*}
\label{LdrevODE}
\frac{{\rm d}\omega_{s,t}(z)}{{\rm d}t} = (\omega_{s,t}(z) - \sigma(t))(1-\closure{\sigma(t)}\omega_{s,t}(z))q(\omega_{s,t}(z),t)\hspace{15pt}(z \in \D,\,\textrm{a.e.}~t \ge s),
\end{equation*}
and
\begin{equation}
\label{LddecLDE}
\de_{t}g_{t}(z) = (z-\sigma(t))(\closure{\sigma(t)}z -1)\de_{z}g_{t}(z)q(z,t)\hspace{15pt}(z \in \D,\,\textrm{a.e.}~t \ge 0),
\end{equation}
where $q \in \HF$ and $\sigma \in \DW$.

For $(g_t) \in \DLC$, an intersection of all image of $\D$ under $(g_t)$ for $t \geq 0$ is denoted by $\Lambda[(g_t)]$, i.e.,
$$
\Lambda[(g_t)] := \bigcap_{t \geq 0} \closure{g_t(\D)}.
$$
In the work by Betker \cite{Betker:1992}, an image of an inverse Loewner chain is assumed to shrink to the origin as $t \to \infty$.
On the other hand, the situation is rather complicated in the case of decreasing Loewner chains.
One cannot even expect that $\Lambda[(g_{t})]$ is a simply-connected domain.

Lastly, we define $\Delta[(g_{t})]$ by
$$
\Delta[(g_{t})] := \left\{\frac1{\bar{w}}: w \in \CC\backslash\Lambda[(g_t)]\right\}.
$$

	%
\section{General quasiconformal extension criterion}\label{Section3}

In this section we give a proof of Theorem \ref{main01}.

\subsection{Approximation lemmas for $(\varphi_{s,t})$ and $(f_{t})$}

In order to show Theorem \ref{main01}, we need the following approximation lemmas for evolution families and Loewner chains.

Our first lemma is a special case of \cite[Lemma I.37]{Roth:1998}.
Since in our setting some arguments are simplified, we reconstruct the proof of the lemma\footnote{The author wishes to thank Professor Oliver Roth for his suggestion to write a proof of the lemma.}.

\begin{lem}
\label{HVweaktoEF}
Suppose that $\Gamma := \{G_{n}(\,\cdot\,, t) : \mathrm{a.e.}~t \in [0,T], n \in \N\} \subset \HV$ forms a normal family for all $T>0$.
If $\{G_{n}\}_{n \in \N} \subset \Gamma$ is a sequence converging weakly to $G \in \HV$, then a sequence of evolution families $(\varphi_{s,t}^{n})$ associated with $G_{n}$ converges locally uniformly to $(\varphi_{s,t})$ associated with $G$ on $(z,t) \in \D \times [s,\infty)$.
\end{lem}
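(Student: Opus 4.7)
The plan is to exploit the one-to-one correspondence between Herglotz vector fields and evolution families (Theorem~\ref{EFtoHVthm}), which recasts $\varphi_{s,t}^n(z)$ as the unique Carath\'eodory solution of the initial value problem $\dot x(t) = G_n(x(t), t)$, $x(s) = z$. The strategy has three stages: (i) upgrade weak convergence $G_n \to G$ to spatial locally uniform convergence $G_n(\cdot, t) \to G(\cdot, t)$ for a.e.\ $t$; (ii) show that $\{(\varphi_{s,t}^n)\}$ is precompact in the topology of locally uniform convergence on $\D \times [s, \infty)$; (iii) identify every subsequential limit of $(\varphi^n)$ as the unique Carath\'eodory solution driven by $G$. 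A standard subsequence argument then forces the whole sequence to converge.

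For (i), for each $t$ at which normality holds, any subsequence of $\{G_n(\cdot, t)\}_n$ admits, by Montel, a further subsequence converging locally uniformly in $z$ to some limit $H_t$. Testing the weak convergence against products $\chi(z)\eta_\varepsilon(u)$ with $\chi \in C_0^\infty(\D)$ radial and $\eta_\varepsilon$ an approximate identity at $t$, and using the holomorphic mean-value identity $(\chi * G_n)(z_0, t) = G_n(z_0, t)$, identifies $H_t$ uniquely as $G(\cdot, t)$ for a.e.\ $t$. Hence $G_n(\cdot, t) \to G(\cdot, t)$ locally uniformly in $z$ for almost every $t$.

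For (ii), the Berkson--Porta representation $G_n(z,t) = (z - \tau_n(t))(1 - \overline{\tau_n(t)} z) p_n(z,t)$ combined with the Harnack-type estimate $|p_n(z,t)| \le C_K |p_n(0,t)|$ for $z \in K$ yields a pointwise-in-$t$ bound $\sup_n \sup_{z \in K}|G_n(z,t)| \le C'_K \sup_n |p_n(0,t)|$. Upgrading this to a locally integrable majorant $\tilde M_K$ uses weak convergence of $G_n$ at well-chosen sample points of $\D$ to obtain weak $L^1_{\mathrm{loc}}$-convergence of $|p_n(0, \cdot)|$, followed by a Dunford--Pettis style argument for uniform integrability. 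The integral equation $\varphi_{s,t}^n(z) = z + \int_s^t G_n(\varphi_{s,u}^n(z), u)\,du$ then gives equi-absolute continuity in $t$; since the $\varphi^n_{s,t}$ are automatically bounded $\D$-valued holomorphic maps of $z$, Arzel\`a--Ascoli in $t$ combined with Vitali in $z$ delivers joint precompactness of $\{\varphi^n\}$ on $\D \times [s, T]$ for every $T > s$.

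Given a locally uniformly convergent subsequence $\varphi^{n_k} \to \psi$, step (iii) decomposes
\[
G_{n_k}(\varphi^{n_k}_{s,u}(z), u) - G(\psi_{s,u}(z), u) = \bigl[G_{n_k}(\varphi^{n_k}_{s,u}(z), u) - G_{n_k}(\psi_{s,u}(z), u)\bigr] + \bigl[G_{n_k}(\psi_{s,u}(z), u) - G(\psi_{s,u}(z), u)\bigr];
\]
the first bracket is controlled by Lemma~\ref{locallyLip} with a Lipschitz constant uniform in $n$ (via Cauchy's estimate on the normal family) times $|\varphi^{n_k}_{s,u}(z) - \psi_{s,u}(z)|$, and the second by (i) together with dominated convergence against $\tilde M_K$. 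Lebesgue dominated convergence then yields $\psi_{s,t}(z) = z + \int_s^t G(\psi_{s,u}(z), u)\,du$, and Theorem~\ref{EFtoHVthm} uniqueness identifies $\psi \equiv \varphi$. The main obstacle is step (ii), specifically the upgrade from the pointwise-in-$t$ normality bound to a locally integrable majorant $\tilde M_K$, made delicate by the possible vanishing of the Berkson--Porta factor $(z - \tau_n(t))(1 - \overline{\tau_n(t)}z)$ at points of $\D$ along the sequence; everything else is a Carath\'eodory continuous-dependence argument for ODEs with weakly convergent right-hand sides.
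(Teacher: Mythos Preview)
Your step (i) is false, and this breaks step (iii). Weak convergence of $G_n$ to $G$ together with normality of $\{G_n(\cdot,t)\}_n$ for a.e.\ $t$ does \emph{not} force $G_n(\cdot,t)\to G(\cdot,t)$ for a.e.\ $t$. Take $G_n(z,t):=-z\bigl(1+\sin(nt)\bigr)$: each $G_n$ is a Herglotz vector field (Berkson--Porta data $\tau\equiv 0$, $p_n(z,t)=1+\sin(nt)\ge 0$), the family $\{G_n(\cdot,t)\}_n$ is bounded by $2|z|$ and hence normal for every $t$, and $G_n\to G(z,t)=-z$ weakly in $t$ by Riemann--Lebesgue. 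Yet $G_n(z,t)$ fails to converge for every $t$ with $t/\pi$ irrational. Your approximate-identity argument is precisely the forbidden swap of $\lim_{n}$ and $\lim_{\varepsilon\to 0}$: with $\eta_\varepsilon$ centered at $t$ one has $\int\eta_\varepsilon(u)G_n(z_0,u)\,du\to -z_0$ as $n\to\infty$ for each fixed $\varepsilon$, but $\to -z_0(1+\sin(nt))$ as $\varepsilon\to 0$ for each fixed $n$. Consequently, in step (iii) the second bracket $G_{n_k}(\psi_{s,u}(z),u)-G(\psi_{s,u}(z),u)$ equals $-\psi_{s,u}(z)\sin(n_k u)$, which does not tend to $0$ pointwise in $u$, so dominated convergence is unavailable; only its \emph{integral} over $[s,t]$ tends to $0$.

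The paper's proof never attempts to upgrade to pointwise-in-$t$ convergence. It fixes the limiting trajectory $u\mapsto\varphi_{s,u}(z)$ and works directly with the integrated error
\[
\alpha_n(z,t):=\int_s^t\bigl[G(\varphi_{s,u}(z),u)-G_n(\varphi_{s,u}(z),u)\bigr]\,du,
\]
showing $\alpha_n\to 0$ uniformly on $K\times[s,T]$ via weak convergence plus an equicontinuity argument drawn from the normality hypothesis (uniform-in-$n$ Lipschitz bounds on compacta). The comparison $|\varphi_{s,t}-\varphi_{s,t}^n|$ is then closed by Gronwall's inequality, iterated over a finite partition of $[s,T]$ to keep the approximating trajectories inside a fixed compact set. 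Your compactness\,/\,identification scheme could be salvaged, but only by replacing the dominated-convergence step in (iii) with exactly this ``integrate first, then use weak convergence along a fixed curve'' argument; at that point you are essentially reproducing the paper's part~(a), and the remaining Arzel\`a--Ascoli and Dunford--Pettis machinery in your step (ii) becomes heavier than the direct Gronwall route.
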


\no
Here, a sequence of Herglotz vector fields $\{G_{n}\}_{n} \in \HV$ as holomorphic functions on $\D$ converges \textit{weakly} to $G \in \HV$ on $I$ means that for any compact subset $I' \subset I$,
$$
\int_{I'} G_{n}(z,u) du \to \int_{I'}G(z,u)du
$$
as $n \to \infty$.
We remark that the sequence $\{G_{n}\} \subset \HV$ always has a weakly convergent subsequence with weak limit $G \in \HV$ (see \cite[Corollary I.30]{Roth:1998}).

In the proof, we will use the following Gronwall type inequality;

\begin{knownlem}[{See e.g. \cite{Dragomir:2003}}]
\label{gronwall}
Let $\theta \in L^{\infty}([a,b]; \R),\,k \in L^{1}([a,b];\R)$ non-negative and $f:[a,b] \to \R$ increasing. If
$$
\theta(t) \leq f(t) + \int_{a}^{t}k(\xi)\theta(\xi) d\xi, \hspace{15pt} t \in [a,b],
$$
then
$$
\theta(t) \leq f(t) \exp\left(\int_{a}^{t}k(\xi) d\xi\right),\hspace{15pt} t \in [a,b].
$$
\end{knownlem}

\begin{proof}[\textbf{Proof of Lemma \ref{HVweaktoEF}}]
Let $K$ be a compact subset in $\D$ and $T > s$. 
We prepare some notations.
Let $O_{K,T} := \{\varphi_{s,t}(z) : z \in K,\, t \in [s,T] \}$.
Since $O_{K,T}$ is compact, there exist real constants $0 < b < b' < \infty$ such that
$$
O_{K,T} \subsetneq A_{K,T} \subsetneq B_{K,T} \subsetneq \D,
$$
where
$$
A_{K,T} := \{ w \in \D : d(w, O_{K,T}) < b\}
$$
and
$$
B_{K,T} := \{ w \in \D : d(w, O_{K,T}) \le b'\}.
$$
Then, take $\vareps \in (0,b)$.

\no
\textbf{(a).} 
Let
$$
\alpha_{n}(z,t) := \int_{s}^{t}\left[G(\varphi_{s,u}(z), u) - G_{n}(\varphi_{s,u}(z), u) \right] du.
$$
Since $G_{n}$ converges weakly to $G$, $\alpha_{n}(z,t)$ tends to 0 pointwise on $(z,t) \in K \times [s,T]$ as $n \to \infty$.

We will show that the above convergence is uniform on $K \times [s,T]$.
By the normality of $\Gamma$, one can find a uniform constant $M_{K,T}$ depending only on $K$ and $T$ such that
$$
|G_{n}(w_{1}, t) - G_{n}(w_{2}, t)| \le M_{K,T}|w_{1}-w_{2}|
$$
for all $w_{1}, w_{2} \in B_{K,T}$, almost all $t \in [s,T]$ and all $n \in \N$.
Moreover, there exists a constant $M_{K,T}'$ such that $|G_{n}(w,t)| \le M_{K,T}'$ for all $w \in B_{K,T}$, almost all $t \in [s,T]$ and $n \in \N$.
Therefore,
\begin{eqnarray*}
\lefteqn{\left|\int_{s}^{t_{1}}G_{n}(w_{1}, u) du - \int_{s}^{t_{2}}G_{n}(w_{2}, u) du\right|}\\
	&\leq&
\left|\int_{s}^{t_{1}}\left[G_{n}(w_{1}, u) - G_{n}(w_{2}, u)\right] du\right| 
+ \left|\int_{t_{1}}^{t_{2}}G_{n}(w_{2}, u) du\right|\\[5pt]
	&\leq&
M_{K,T}|w_{1}-w_{2}|\cdot|t_{1}-s| + M_{K,T}'|t_{1}-t_{2}| 
\end{eqnarray*}
for all $w_{1}, w_{2} \in B_{K,T}$, almost all $t_{1}, t_{2} \in [s,T]$ and all $n \in \N$.
It implies that the family $\{\int_{s}^{t}G_{n}(\varphi_{s,t}(z),u)du\}_{n \in \N}$ is equicontinuous on $K \times [s,T]$.
Hence $\alpha_{n}(z,t)$ converges to 0 uniformly on $K \times [s,T]$ as $n \to \infty$.

\no
\textbf{(b).} 
By Definition \ref{WVdef} and {\cite[Lemma 4.2]{BracciCD:evolutionI}}, one can choose a constant $M_{K,T}''$ such that
\begin{equation}
\label{GGninequality}
\begin{array}{lll}
|G(\varphi_{s,t}, t)| \leq M''_{K,T}, \hspace{15pt}|G_{n}(\varphi_{s,t}, t)| \leq M''_{K,T} \hspace{15pt} \textup{and}\\[6pt]
|G(\varphi_{s,t}, t) - G(\varphi_{s,t}', t)| \leq M''_{K,T}|\varphi_{s,t}-\varphi_{s,t}'|
\end{array}
\end{equation}
for all $w, w' \in B_{K,T}$, almost all $t \in [s,T]$ and all $n \in \N$.

Let $u > 0$ be a constant satisfying 
\begin{equation}
\label{Mbineq}
(\vareps + 2M_{K,T}''u)\exp(M_{K,T}''u) < b.
\end{equation}
We will prove that under inequality \eqref{Mbineq}, $w_{n}(z,t)(=\varphi_{s,t}^{n}(z))$ belongs to $B_{K,T}$ for all $z \in K$ and all $t \in [s,s + u]$.
Let us fix $n \in \N$ and let $u_{n}$ be the largest number such that $w_{n}(K, t)$ lies on $B_{K,T}$ for all $ t \in [s,s + u_{n}]$.
By assumption, $u_{n}$ is at least strictly greater than 0.
For all $t \in [s,s + u_{n}]$ we have
\begin{eqnarray*}
\lefteqn{|w(z,t)- w_{n}(z,t)| 
	=
\left|\int_{s}^{t}G(w(z,u), u) du - \int_{s}^{t}G_{n}(w_{n}(z,u), u)du\right|}\\
	&\le&
\int_{s}^{t}|G(w(z,u), u) du - G(w_{n}(z,u), u)|du
	+
\int_{s}^{t}|G(w_{n}(z,u), u) - G_{n}(w_{n}(z,u), u)|du\\
	&\le&
2M_{K,T}''u_{n} + M_{K,T}''\int_{s}^{t}|w(z,u)-w_{n}(z,u)| du.
\end{eqnarray*}
Applying Lemma \ref{gronwall}, $|w(z,t)- w_{n}(z,t)| < 2M_{K,T}''u_{n}\exp(M_{K,T}''u_{n})$ for all $z \in K$ and all $t \in [s,s + u_{n}]$.
Now, suppose that $2M_{K,T}''u_{n}\exp(M_{K,T}''u_{n}) < b$. 
Then $w_{n}(z,t) \in A_{K,T}$ for all $z \in K$ and all $t \in [s,s + u_{n}]$.
It, however, contradicts that $u_{n}$ is the largest number such that $w_{n}(K, t)$ lies on $B_{K,T}$ for all $ t \in [s,s + u_{n}]$ (note that $A_{K,T}$ is an open set and $B_{K,T}$ is compact).
Thus $b \le 2M_{K,T}''u_{n}\exp(M_{K,T}''u_{n})$.
Since $(\vareps + 2M_{K,T}''u)\exp(M_{K,T}''u) < b$, we obtain $u \leq u_{n}$.
It concludes that $w_{n}(z,t) \in B_{K,T}$ for all $z \in K$, all $t \in [s, s + u]$ and all $n \in \N$.

The above argument yields
$$
\left|\int_{s}^{t}G_{n}(\varphi_{s,u}(z), u) du - \int_{s}^{t}G_{n}(\varphi_{s,u}^{n}(z), u)du\right| \le M_{K,T}''\int_{s}^{t}|\varphi_{s,t}(z)-\varphi_{s,t}^{n}(z)|ds
$$
for all $z \in K$, all $t \in [s,s + u]$ and all $n \in \N$.

\no
\textbf{(c).}
By (a) and (b), we have
\begin{eqnarray*}
\lefteqn{|\varphi_{s,t}(z) - \varphi_{s,t}^{n}(z)|}\\
	&=&
\left|\int_{s}^{t}G(\varphi_{s,u}(z),u)du - \int_{s}^{t}G_{n}(\varphi_{s,u}^{n}(z), u)du\right|\\
	&\le&
\int_{s}^{t}|G(\varphi_{s,u}(z),u)du - G_{n}(\varphi_{s,u}(z), u)|du + \int_{s}^{t}|G_{n}(\varphi_{s,u}(z),u)du - G_{n}(\varphi_{s,u}^{n}(z), u)|du\\
	&\le&
\alpha_{n}(z,t) + M_{K,T}''\int_{s}^{t}|\varphi_{s,u}(z) - \varphi_{s,u}^{n}(z)| du
\end{eqnarray*}
for all $z \in K$, all $t \in [s,s + u]$ and all $n \in \N$.
Applying Lemma \ref{gronwall}, $\varphi_{s,t}^{n}(z)$ converges to $\varphi_{s,t}(z)$ uniformly on $(z,t) \in K \times [s,s + u]$.

\no
\textbf{(d).}
We finally prove that $\varphi_{s,t}^{n}(z)$ converges to $\varphi_{s,t}(z)$ uniformly on $(z,t) \in K \times [s,T]$.
Let us choose $N \in \N$ such that $|\varphi_{s,t}(z) - \varphi_{s,t}^{n}(z)| < \vareps$ for all $z \in K$, all $t \in [s,s + u']$ and all $n \in \N$, where $u'>0$.
Then we fix $n >N$ and define $u_{n}'$ as the largest number such that $\varphi_{s,t}^{n}(K)$ lies on $B_{K,T}$ for all $t \in [s,s + u_{n}']$, as in part (b).
Remark that $u_{n}' \ge u'$.
Then following the similar line as (b) with Lemma \ref{gronwall} we have
$$
|\varphi_{s,t}(z)-\varphi_{s,t}^{n}(z)| \le \left(|\varphi_{s,s + u'}(z) - \varphi_{s,s + u'}^{n}(z)| + 2(u_{n}'-u')M_{K,T}''\right)\exp(M_{K,T}''(u_{n}'-u')).
$$ 
for all $z \in K$ and all $t \in [s + u', s + u_{n}']$.
If we suppose $(\vareps + 2(u_{n}' -u')M_{K,T}'')\exp(M_{K,T}''(u_{n}'-u_{n})) < b$, then $\varphi_{s,t}^{n}(z) \in A_{K,T}$ for all $z \in K$ and all $t \in [s + u', s + u_{n}']$. 
So, we have $b \le (\vareps + 2(u_{n}' -u')M_{K,T}'')\exp(M_{K,T}''(u_{n}'-u'))$ and hence $u \le u_{n}' - u'$.
It concludes with the last inequality in \eqref{GGninequality} that
$$
\left|\int_{s}^{t} [G(\varphi_{s,u}(z),u) - G(\varphi_{s,u}^{n}(z), u)] du\right|
	\le
M_{K,T}''\int_{s}^{t}|\varphi_{s,u}(z)-\varphi_{s,u}^{n}(z)| du
$$
for all $z \in K$, all $t \in [s, s + u + u']$ and all $n >N$.
Repeating the argument in part (c)  we have
$$
|\varphi_{s,t}(z) - \varphi_{s,t}^{n}(z)|
	\le
\alpha_{n}(z,t) + M_{K,T}''\int_{s}^{t}|\varphi_{s,t}(z) - \varphi_{s,t}^{n}(z)| du
$$
for all $z \in K$, all $t \in [s,s + u + u']$ and all $n >N$.
Therefore $(\varphi_{s,t}^{n})$ converges to $(\varphi_{s,t})$ uniformly on $(z,t) \in K \times [s,s + u + u']$.
Since the interval $[s,T]$ is compact, we have prove that $(\varphi_{s,t}^{n})$ converges to $(\varphi_{s,t})$ uniformly on $(z,t) \in K \times [s, T]$.
\end{proof}

\begin{rem}
The opposite direction of Lemma \ref{HVweaktoEF} does not hold in general.
Let $(\varphi_{s,t}^{n})$ be a locally uniformly convergent sequence of evolution families and $(f_{t}^{n}) \in \LL[(\varphi_{s,t}^{n})]$. 
Then due to following Lemma \ref{seqEVtoLC}, for a fixed $t \in [0,\infty)$ $\{f_{t}^{n}\}_{n}$ (hence $\{\de_{z}f_{t}^{n}\}_{n}$ also) converges locally uniformly on $\D$. 
Now observing $G_{n}(z,t) := -\de_{z}f_{t}^{n}(z) / \de_{t}f_{t}^{n}(z)$, one cannot expect a nice convergent property to $\{\de_{t}f_{t}^{n}\}_{n}$, and therefore to $\{G_{n}\}_{n}$. 
See also \cite[Section 4.2]{LindMR:2009} in which a counterexample is presented in the chordal setting.
\end{rem}

\begin{lem}
\label{seqEVtoLC}
Let $(\varphi_{s,t}^{n})$ be a sequence of evolution families converging to $(\varphi_{s,t}) \in \EF$ locally uniformly on $(z,t) \in \D \times [0,\infty)$.
Let $(f_{t}^{n}) \in \LC_{0}$ and $(f_{t}) \in \LC_{0}$ associated with $(\varphi_{s,t}^{n})$ and $(\varphi_{s,t})$, respectively.
Then, $(f_{t}^{n})$ converges locally uniformly to $(f_{t})$ on $(z,t) \in \D \times [0,\infty)$.
\end{lem}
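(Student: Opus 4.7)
The plan is to reduce to an internally normalized setting via Lemma \ref{LCunique02} and then pass to the limit in the explicit construction \eqref{LClimit} of the range-normalized Loewner chain.

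First I would exploit Lemma \ref{LCunique02} to decompose $\varphi_{s,t}^n = M_t^n \circ \psi_{s,t}^n \circ (M_s^n)^{-1}$ and $\varphi_{s,t} = M_t \circ \psi_{s,t} \circ M_s^{-1}$, where $M_t$ is the Möbius automorphism of $\D$ defined in \eqref{h_t}. Since $M_t^n$ is determined by $\alpha_n(t) = \varphi_{0,t}^n(0)$ and by the argument of $(\varphi_{0,t}^n)'(0)\neq 0$, locally uniform convergence of $\varphi_{s,t}^n \to \varphi_{s,t}$ combined with Weierstrass' theorem for the $z$-derivative yields $\alpha_n \to \alpha$ and $\beta_n \to \beta$ locally uniformly on $[0,\infty)$. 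Consequently $M_t^n \to M_t$ and hence $\psi_{s,t}^n = (M_t^n)^{-1} \circ \varphi_{s,t}^n \circ M_s^n \to \psi_{s,t}$ locally uniformly in $(z,t)$.

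Next I would prove $h_s^n \to h_s$ locally uniformly on $\D \times [0,\infty)$, where $h_s^n$ and $h_s$ are the range-normalized chains associated with $(\psi_{s,t}^n)$ and $(\psi_{s,t})$ via \eqref{LClimit}. Since $\psi_{0,s}^n(0) = 0$, Schwarz's lemma together with the semigroup relation $\psi_{0,t}^n = \psi_{s,t}^n \circ \psi_{0,s}^n$ forces $(\psi_{0,s}^n)'(0) \in (0,1]$ and monotonically decreasing in $s$. The first step supplies a uniform lower bound $(\psi_{0,T}^n)'(0) \ge \delta > 0$ for large $n$, so the normalization constants $(h_s^n)'(0) = 1/(\psi_{0,s}^n)'(0)$ lie in $[1, 1/\delta]$ uniformly in $n$ and $s \in [0,T]$. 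Together with $h_s^n(0) = 0$ and univalence, the growth theorem ensures $\{h_s^n : n \in \N,\, s \in [0,T]\}$ is locally uniformly bounded on $\D$ and hence normal in $z$; equicontinuity in $s$ (via the Lipschitz estimate in Lemma \ref{locallyLip} applied uniformly in $n$) extends normality to $(z,s) \in \D \times [0,T]$.

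The main obstacle is to identify every subsequential limit as $h_s$. Given a subsequence $h_s^{n_k} \to \tilde{h}_s$ locally uniformly on $\D \times [0,T]$, Hurwitz yields univalence of $\tilde{h}_s$ with $\tilde{h}_0 \in \SS$, and passing to the limit in $h_s^{n_k} = h_t^{n_k} \circ \psi_{s,t}^{n_k}$ gives $\tilde{h}_s = \tilde{h}_t \circ \psi_{s,t}$, so that $(\tilde{h}_s)$ is a Loewner chain associated with $(\psi_{s,t})$. The delicate point is that the normalization $\tilde{h}_0 \in \SS$ alone does not force range-normalization: by Theorem \ref{LCunique}, whenever $\Omega[(h_s)] \neq \C$ there is a one-parameter family of associated chains with $g_0 \in \SS$. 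To overcome this I would establish a uniform-in-$n$ rate of convergence of $\psi_{s,t}^n(z)/(\psi_{0,t}^n)'(0)$ to $h_s^n(z)$ as $t \to \infty$ on compact subsets of $\D$, which justifies interchanging $n \to \infty$ and $t \to \infty$ in \eqref{LClimit} and identifies $\tilde{h}_s(z) = \lim_{t \to \infty} \psi_{s,t}(z)/\psi_{0,t}'(0) = h_s(z)$. Uniqueness of the subsequential limit then gives $h_s^n \to h_s$ locally uniformly on $\D \times [0,\infty)$, and combining with the first step yields $f_t^n = h_t^n \circ (M_t^n)^{-1} \to h_t \circ M_t^{-1} = f_t$ locally uniformly, completing the proof.
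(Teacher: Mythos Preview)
Your approach is essentially the same as the paper's: both rewrite the range-normalized chain via Lemma~\ref{LCunique02} as a limit $f_s(z)=\lim_{t\to\infty}(L_t\circ\varphi_{s,t})(z)$ with $L_t$ a M\"obius map determined by $\varphi_{0,t}(0)$ and $\varphi_{0,t}'(0)$, and both must then interchange the limits $n\to\infty$ and $t\to\infty$. The paper's proof is much terser than yours---it simply writes $\lim_{n}\lim_{t}=\lim_{t}\lim_{n}$ without further argument---whereas you correctly single out this interchange as the delicate point and propose to resolve it by a uniform-in-$n$ rate of convergence in \eqref{LClimit}. In that sense your outline is more honest than the paper's own proof about where the real work lies.

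One step in your sketch does not go through as written: you invoke Lemma~\ref{locallyLip} ``applied uniformly in $n$'' to get equicontinuity of $h_s^n$ in $s$, but the hypothesis of Lemma~\ref{seqEVtoLC} gives only locally uniform convergence of the evolution families, not any uniform control on the associated Herglotz vector fields, so no uniform Lipschitz constant is available. This is not fatal: the functional equation $h_s^n=h_t^n\circ\psi_{s,t}^n$ together with normality of $\{h_t^n\}$ for a fixed~$t$ and the already-established convergence $\psi_{s,t}^n\to\psi_{s,t}$ gives the required control in $s$ without appealing to the vector fields. With that repair, your plan matches the paper's strategy while making the limit-interchange step explicit.
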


\begin{proof}
Following to \cite[Theorem 3.3]{MR2789373} and its proof, we have
\begin{equation}
\label{seqEVtoLC-e1}
f_{s}(z) = \lim_{t \to \infty} (L_{t} \circ \varphi_{s,t})(z)
\end{equation} 
for a certain family $(L_{t})_{t \ge s}$ of M\"obius transformations of $\CC$ whose coefficients consist of values of $(\varphi_{0,t})$ and its derivative $(\varphi_{0,t}')$ at the origin, where the limit \eqref{seqEVtoLC-e1} is attained uniformly on compact subsets of the unit disk $\D$.
By assumption, $L_{t}^{n}$, which appears in the relation $f_{s}^{n}(z) = \lim_{t \to \infty} (L_{t}^{n} \circ \varphi_{s,t})(z)$, converges to $L_{t}$ uniformly on $\C$. Hence
$$\lim_{n \to \infty}f_{s}^{n}(z) 
	=\lim_{n \to \infty}\lim_{t \to \infty} (L_{t}^{n} \circ \varphi_{s,t}^{n})(z) 
	=\lim_{t \to \infty} \lim_{n \to \infty}(L_{t}^{n} \circ \varphi_{s,t}^{n})(z)  = f_{s}(z)
$$
on a certain compact subset of $\D$.
\end{proof}

\subsection{The case when $\tau$ is constant}

We firstly show Theorem \ref{main01} with the additional assumption that $\tau \in \DW$ is constant.

\begin{thm}
\label{constfunction-inner}
Under the statement of Theorem \ref{main01}, additionally we assume that $\tau$ is an internal fixed point in $\D$.
Then we obtain the same consequence as Theorem \ref{main01}.
\end{thm}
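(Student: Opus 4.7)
The plan is to reduce Theorem \ref{constfunction-inner} to the classical radial setting ($\tilde\tau \equiv 0$) by a single M\"obius change of variable, and then to apply a direct Beltrami-coefficient computation in the style of Becker (Theorem \ref{Beckerthm}). Let $A(z) := (z-\tau)/(1-\bar\tau z)$ denote the disk automorphism of $\CC$ which sends $\tau$ to $0$, preserves both $\D$ and $\CC \setminus \closure\D$, and commutes with the reflection $z \mapsto 1/\bar z$ (i.e.\ $1/\overline{A(w)} = A(1/\bar w)$, as one verifies directly). Set
$$
\tilde f_{s}(\zeta) := f_{s/(1-|\tau|^{2})}\bigl(A^{-1}(\zeta)\bigr),\qquad \tilde g_{s}(\zeta) := A\bigl(g_{s/(1-|\tau|^{2})}(A^{-1}(\zeta))\bigr).
$$
A chain-rule computation, using the identity $(A^{-1}(\zeta)-\tau)(1-\bar\tau A^{-1}(\zeta)) = (1-|\tau|^{2})\,\zeta\,(A^{-1})'(\zeta)$, shows that $(\tilde f_{s}) \in \LC$ and $(\tilde g_{s}) \in \DLC$ with Denjoy--Wolff function $\tilde\tau \equiv 0$ and Herglotz data $\tilde p(\zeta,s) := p(A^{-1}(\zeta), s/(1-|\tau|^{2}))$, $\tilde q(\zeta,s) := q(A^{-1}(\zeta), s/(1-|\tau|^{2}))$. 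The pointwise inequality \eqref{maininequality} transfers verbatim to $(\tilde p, \tilde q)$. Using the commutation of $A$ with reflection, the map $\tilde\Phi$ built from $(\tilde f_{s}, \tilde g_{s})$ via \eqref{Phi} satisfies $\tilde\Phi = \Phi \circ A^{-1}$; since $A$ is M\"obius on $\CC$, $\Phi$ is $k$-quasiconformal if and only if $\tilde\Phi$ is.

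It thus suffices to prove the theorem in the case $\tau \equiv 0$, where the Loewner equations reduce to $\de_{s}f_{s} = z\,\de_{z}f_{s}\,p$ and $\de_{s}g_{s} = -z\,\de_{z}g_{s}\,q$. The approach is to parametrize points $\zeta = 1/\overline{g_{s}(e^{i\theta})}$ of $\Delta[(g_{s})]\setminus\closure\D$ by $(s,\theta)$, differentiate the defining relation $\Phi(\zeta) = f_{s}(e^{i\theta})$ in $s$ and $\theta$, and solve the resulting $2\times 2$ linear system for the Wirtinger derivatives $\Phi_{\zeta}, \Phi_{\bar\zeta}$. The Beltrami coefficient simplifies algebraically to
$$
\mu_{\Phi}(\zeta) \;=\; \frac{p(e^{i\theta},s) - \overline{q(e^{i\theta},s)}}{p(e^{i\theta},s) + q(e^{i\theta},s)},
$$
which is bounded in modulus by $k$ by assumption \eqref{maininequality}. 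For $q \equiv 1$ this is precisely Becker's original computation; the general $q$ introduces no new conceptual difficulty, only bookkeeping.

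The main obstacle is justifying the boundary regularity that makes the above computation rigorous: continuous extension of $f_{s}$ and $g_{s}$ to $\closure\D$ for $s < T$, together with a.e.\ angular differentiability, are not guaranteed by Definition \ref{defLoewner} alone, so the radial-limit device of Remark \ref{rem1-1} is not directly available. The remedy is exactly the approximation method developed in the next subsection: by Lemmas \ref{HVweaktoEF} and \ref{seqEVtoLC}, we may weakly approximate $(p,q)$ by Herglotz pairs $(p_{n},q_{n})$ which still satisfy \eqref{maininequality} and whose associated chains $(f^{n}_{s}, g^{n}_{s})$ have smooth quasi-circle (slit) image boundaries, so that the Beltrami argument above applies directly and produces $k$-quasiconformal extensions $\Phi_{n}$. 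Compactness of the family of $k$-quasiconformal maps of $\CC$ under locally uniform convergence yields a subsequential limit, which the two approximation lemmas identify with the map $\Phi$ defined in \eqref{Phi}, completing the proof.
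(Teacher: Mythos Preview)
Your M\"obius reduction to $\tau\equiv 0$ is correct and is, in fact, what the paper does implicitly: the paper's regularising map $\nu[\rho]$ is exactly $A^{-1}(\rho\,A(\cdot))$, i.e.\ Euclidean dilation conjugated by your $A$. Once $\tau=0$, the radial-limit device of Remark~\ref{rem1-1} \emph{is} available, contrary to your assertion: with $f_s^\rho(z):=f_s(\rho z)$ and $g_s^\rho(z):=g_s(\rho z)$ one obtains chains that extend continuously to $\closure\D$ and satisfy the same PDEs with Herglotz data $p(\rho z,s),\,q(\rho z,s)$, so that \eqref{maininequality} persists. This is precisely the paper's route. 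Your alternative---a weak approximation of $(p,q)$ by unspecified nicer pairs, followed by Lemmas~\ref{HVweaktoEF} and~\ref{seqEVtoLC}---is both vaguer and unnecessarily indirect; those lemmas are designed for, and in the paper used only for, the passage from step $\tau$ to measurable $\tau$ in Section~\ref{generalcase}. You give no construction of $(p_n,q_n)$ that simultaneously produces smooth image boundaries and preserves \eqref{maininequality}.

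The more substantive gap is that you omit the main analytic content of the proof: verifying that $\Phi_\rho$ (or your $\Phi_n$) is a well-defined \emph{homeomorphism} before one can speak of its Beltrami coefficient. The parametrisation $(s,\theta)\mapsto 1/\overline{g_s^\rho(e^{i\theta})}$ of $\Delta[(g_s^\rho)]\setminus\closure\D$ is not obviously injective, nor is it clear that distinct level curves $f_{t_1}^\rho(\partial\D)$ and $f_{t_2}^\rho(\partial\D)$ do not meet. The paper handles this by a Schwarz--Pick rigidity argument: if, say, $g_{t_1}^\rho(e^{i\theta_1})=g_{t_2}^\rho(e^{i\theta_2})$ with $t_1<t_2$, then $\omega_{t_1,t_2}$ fixes $\tau$ and preserves hyperbolic distance from $\tau$ at an interior point, forcing $\omega_{t_1,t}$ to be a hyperbolic rotation about $\tau$ for $t\in[t_1,t_2]$ and hence $q(\D,t)\subset i\R$ there---which is then excluded by splitting $[0,T)$ into the sets $I_A,I_B$. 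An analogous argument is run for $(f_t^\rho)$. Without this step your displayed formula for $\mu_\Phi$ is purely formal: it computes the dilatation of a map that has not been shown to exist as a homeomorphism. (Incidentally, the formula carries an additional unimodular factor $\overline{\partial_t g/g^2}\cdot g^2/\partial_t g$, harmless for $|\mu_\Phi|$ but worth recording.)
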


\begin{proof}
Let $\rho \in (c, 1)$ with $c > 0$ and
$$
\nu[{\rho}](z) := 
\frac
{\rho\left(\frac{z-\tau}{1-\bar{\tau}z}\right) + \tau}
{1 + \bar{\tau} \rho\left(\frac{z-\tau}{1-\bar{\tau}z}\right)} \,\,:\,\, \D \xrightarrow{\textup{into}} \D
$$
Define $f_t^{\rho}(z) := f_t(\nu[{\rho}](z))$ and $g_t^{\rho}(z) := g_t(\nu[{\rho}](z))$, then
\begin{eqnarray*}
\frac
{\de_{t}f_{t}^{\rho}(z)}
{(z-\tau)(1-\closure{\tau}z)\de_{z}f_{t}^{\rho}(z)}
&=&
\frac
{\de_{t}f_{t}(\nu[\rho](z))}
{(\nu[\rho](z)-\tau)(1-\closure{\tau}\nu[\rho](z))\de_{z}f_{t}(\nu[\rho](z))}\\[3pt]
&=&
p(\nu[\rho](z), t) =: p_{\rho}(z,t) \in \HF
\end{eqnarray*}
and
\begin{eqnarray*}
\frac
{\de_{t}g_{t}^{\rho}(z)}
{(z-\tau)(\closure{\tau}z -1)\de_{z}g_{t}^{\rho}(z)}
&=&
\frac
{\de_{t}g_{t}(\nu[\rho](z))}
{(\nu[\rho](z)-\tau)(\closure{\tau}\nu[\rho](z) -1)\de_{z}g_{t}(\nu[\rho](z))}\\[3pt]
&=&
q(\nu[\rho](z), t) =: q_{\rho}(z,t) \in \HF.
\end{eqnarray*}
Therefore $(f_t^{\rho})$ and $(g_t^{\rho})$ satisfy all the assumptions in Theorem \ref{main01}.
Since  for all $t \in [0,T)$ $f_t^{\rho}$ and $g_t^{\rho}$ have continuous extensions to $\closure{\D}$, $\Phi_{\rho}$ is defined accordingly by
\begin{equation}
\label{Phirho}
\left\{
\begin{array}{lll}
\dstyle \Phi_{\rho}(z) = f_{0}^{\rho}(z), & z \in \D,\\[3pt]
\dstyle \Phi_{\rho}\left(\frac{1}{\closure{g_{t}^{\rho}(e^{i\theta})}}\right) = f_{t}^{\rho}(e^{i\theta}), &\theta \in [0,2\pi)\hspace{5pt}\textup{and}\hspace{5pt}t \in [0,T).
\end{array}
\right.
\end{equation}

Let $I_{B} \subsetneq [0,T)$ be a set such that $p_{\rho}(z,t) \in i\R$ for all $z \in \D$ and all $t \in I_{B}$, and $I_{A} := [0,T) \backslash I_{B}$.
Remark that $I_{A}$ and $I_{B}$ do not depend on $\rho$.

We prove that $\Phi_{\rho}$ is homeomorphism on $\Delta[(g_{t}^{\rho})]$.
In order to show injectivity of $\Phi_{\rho}$, we divide $\Delta[(g_{t}^{\rho})]$ into two domains;
$$
\left\{
\begin{array}{lll}
\dstyle D_{A} &:=&\dstyle \D \cup \bigcup_{t \in I_{A}}\frac{1}{\closure{g_{t}^{\rho}(\de\D)}},\\[15pt]
\dstyle D_{B} &:=&\dstyle \bigcup_{t \in I_{B}}\frac{1}{\closure{g_{t}^{\rho}(\de\D)}}.
\end{array}
\right.
$$

Take two distinct points $z_1, z_2 \in D_{A}$.
If either $z_1$ or $z_2$ is in $\D$, it s clear that $\Phi_{\rho}(z_1) \neq \Phi_{\rho}(z_2)$.
Now, let $z_1, z_2 \in D_{A}\backslash \D$.
The equation $1/\closure{z_1} = g_{t_1}^{\rho}(e^{i\theta_1})$ determines a unique $t_1 \in I_{A}$ and $\theta_1 \in [0,2\pi)$ because of the following.
It is easy to see that there does not exist another $\theta_{1}^{*} \in [0,2\pi)$ such that $1/\closure{z_1} = g_{t_1}^{\rho}(e^{i\theta_1^*})$, because the curve $\{g_{t_1}^{\rho}(e^{i \lambda}) : \lambda \in [0,2\pi)\}$ is Jordan.
If there exists another $t_{1}^{*} \in I_{A}, t_{1}^{*} > t_{1}$, such that $1/\bar{z_1} = g_{t_1^{*}}^{\rho}(e^{i\theta_1})$, then 
\begin{equation}
\label{cal_omega01}
\begin{array}{lll}
\omega_{t_1,t_1^*}(\nu[\rho](e^{i\theta_1})) 
	&= & (g_{t_1}^{-1} \circ g_{t_1^*})(\nu[\rho](e^{i\theta_1}))\\[3pt]
	&= & (g_{t_1}^{-1} \circ g_{t_1})(\nu[\rho](e^{i\theta_1}))\\[3pt]
	&= & \nu[\rho](e^{i\theta_1}).
\end{array}
\end{equation}
Since $\omega_{t_{1}, t_{1}^{*}}(\tau) = \tau$, we have 
\begin{equation*}
\eta_{\D}(\tau, \nu[\rho] (e^{i\theta_1}) ) =  \eta_{\D}(\omega_{t_{1}, t_{1}^{*}}(\tau), \omega_{t_{1}, t_{1}^{*}}(\nu[\rho] (e^{i\theta_1})) ),
\end{equation*} 
where $\eta_{\D}$ is the hyperbolic metric on $\D$. 
Hence by the Schwarz-Pick Theorem, $\omega_{t_1, t_2}$ is a conformal automorphism of $\D$.
By calculation we have
\begin{equation}
\label{rotation01}
\omega_{t_1, t}(z) = \frac
{e^{i\theta(t)}\left(\frac{z-\tau}{1-\bar{\tau}z}\right) + \tau}
{1 + \bar{\tau} e^{i\theta(t)}\left(\frac{z-\tau}{1-\bar{\tau}z}\right)} \hspace{15pt}(t \in [t_{1}, t_{1}^{*}])
\end{equation}
with some real function $\theta: [t_{1}, t_{1}^{*}] \to  \R$ with $\theta(t_{1}^{*}) = 0$. 
It describes a rotation of the unit disk $\D$ around the point $\tau$ in the hyperbolic sense.
Hence one can verify that $q(\D, t)$ lies on the imaginary axis for all $t \in [t_1, t_2)$.
It contradicts the assumption $t \in I_{A}$.

Then, suppose that there exists another $t_1^* \in I_{A},\,t_{1}^{*} \neq t_{1}$, and $\theta_1^* \in [0,2\pi),\,\theta_{1}^{*} \neq \theta_{1}$, such that $1/\bar{z_1} = g_{t_1^*}^{\rho}(e^{i\theta_1^*})$.
We may assume that $t_1^* > t_1$.
Since $g_{t_1}^{\rho}(e^{i\theta_1}) = g_{t_1^*}^{\rho}(e^{i\theta_1^*})$, we have $\omega_{t_1,t_1^*}(\nu[\rho](e^{i\theta_1^*})) = \nu[\rho](e^{i\theta_1})$ as \eqref{cal_omega01}.
Hence $\eta_{\D}(\tau, \nu[\rho] (e^{i\theta_1^{*}})) = \eta_{\D}(\omega_{t_{1}, t_{1}^{*}}(\tau), \omega_{t_{1}, t_{1}^{*}}(\nu[\rho] (e^{i\theta_1^{*}})))$.
It shows that $\omega_{t_{1}, t}$ is a rotation as \eqref{rotation01} and $p(\D, t) \in i\R$ for all $t \in [t_{1}, t_{1}^{*}]$, which again contradicts the assumption that $t \in I_{A}$.

Therefore, there exist unique $t_1, t_2 \in I_{A}$ and $\theta_1, \theta_2 \in [0, 2\pi)$ such that $z_1 = 1/\closure{g_{t_1}^{\rho}(e^{i\theta_1})}$ and $z_2 = 1/\closure{g_{t_2}^{\rho}(e^{i\theta_2})}$.
We may suppose $t_1 \le t_2$.
If $t_1 = t_2$, then since $f_{t}^{\rho}(\de \D)$ is Jordan, $\Phi_{\rho}(z_1) = f_{t_1}(e^{i\theta_1}) \neq f_{t_2}(e^{i\theta_2}) = \Phi_{\rho}(z_2)$.
In the case when $t_1 < t_2$, the same argument as above with \eqref{cal_omega01} and \eqref{rotation01} then shows that $\Phi_{\rho}(z_1) \neq \Phi_{\rho}(z_2)$.
Consequently $\Phi_{\rho}$ is injective on $D_{A}$.

We show injectivity of $\Phi_{\rho}$ on $D_{B}$.
Take one connected component $I^{*}_{B}$ in $I_{B}$.
Since $p_{\rho}(\D, t), q_{\rho}(\D, t) \in i\R$ for all $t \in I_{B}$, corresponding $\varphi_{t_{p}, t} \in \EF$ and $\omega_{t_{p}, t} \in \REF$ are rotations as \eqref{rotation01} on $t \in I^{*}_{B}$.
Hence $\{f_{t}^{\rho}(e^{i\theta}) : \theta \in [0,2\pi), t \in I^{*}_{B}\}$ and $\{g_{t}^{\rho}(e^{i\theta}) : \theta \in [0,2\pi), t \in I^{*}_{B}\}$ are curves.
For a fixed $\theta_{0} \in [0,2\pi)$, one point on the both curves are determined.
Therefore, $\Phi_{\rho} : 1/\closure{\{g_{T}^{\rho}(e^{i\theta}, t \in I^{*}_{B}) : \theta \in [0,2\pi)\}} \to \{f_{T}^{\rho}(e^{i\theta}) : \theta \in [0,2\pi), t \in I^{*}_{B}\}$ is injective.

Since $\Phi_{\rho}(D_{A}) \cap \Phi_{\rho}(D_{B}) = \emptyset$, $\Phi_{\rho}$ gives an injective map on $\Delta[(g_{t}^{\rho})]$ onto $\Omega[(f_{t}^{\rho})]$.
It concludes that $\Phi_{\rho} : \Delta[(g_{t}^{\rho})] \to \Omega[(f_{t}^{\rho})]$ is a homeomorphism.

Differentiations of \eqref{Phirho} both sides with respect to $t$ and $\theta$ yield
\begin{equation*}
\left\{
\begin{array}{lll}
\dstyle
	\de_{z}\Phi_{\rho} \cdot
	\closure{
		\left(\frac{\de_{t}g_{t}^{\rho}(\zeta )}{g_{t}^{\rho}(\zeta)^2}\right)
		} 
	+ 
	\de_{\bar{z}}\Phi_{\rho} \cdot
	\left(
		\frac{\de_{t}g_{t}^{\rho}(\zeta )}{g_{t}^{\rho}(\zeta)}
	\right)
	&=&
	-\de_{t}f_{t}^{\rho}(\zeta) \\[12pt]
\dstyle
	\de_{z}\Phi_{\rho} \cdot
	\closure{
		\left(\frac{\zeta \de_{z}g_{t}^{\rho}(\zeta)}{g_{t}^{\rho}(\zeta)^2}\right)
		}  
	- 
	\de_{\bar{z}}\Phi_{\rho} \cdot
	\left(
		\frac{\zeta \de_{z}g_{t}^{\rho}(\zeta)}{g_{t}^{\rho}(\zeta)}
	\right)
	&=&
	\zeta \de_{z}f_{t}^{\rho}(\zeta)
\end{array}
\right.
\end{equation*}
where $\zeta := e^{i\theta}$, and therefore
\begin{equation}
\label{beltrami}
\begin{array}{lll}
\dstyle\frac{\de_{\bar{z}}\Phi_{\rho}}{\de_{z}\Phi_{\rho}}\left(\frac{1}{\closure{g_{t}^{\rho}(\zeta)}}\right)
 &=& 
\dstyle\left(\frac{g_{t}^{\rho}(\zeta)^2}{\de_{t}g_{t}^{\rho}(\zeta )}\right)\closure{\left(\frac{\de_{t}g_{t}^{\rho}(\zeta )}{g_{t}^{\rho}(\zeta)^2}\right)} \cdot
\frac
{\dstyle \frac{\de_{t}f_{t}^{\rho}(\zeta )}{\zeta \de_{z}f_{t}^{\rho}(\zeta )}- \closure{\left(-\frac{\de_{t}g_{t}^{\rho}(\zeta )}{\zeta \de_{z}g_{t}^{\rho}(\zeta )}\right)}}
{\dstyle \frac{\de_{t}f_{t}^{\rho}(\zeta )}{\zeta \de_{z}f_{t}^{\rho}(\zeta )}+ \left(-\frac{\de_{t}g_{t}^{\rho}(\zeta )}{\zeta \de_{z}g_{t}^{\rho}(\zeta )}\right)}\\[30pt]
&=&
\dstyle\left(\frac{g_{t}^{\rho}(\zeta )^2}{\de_{t}g_{t}^{\rho}(\zeta )}\right)\closure{\left(\frac{\de_{t}g_{t}^{\rho}(\zeta )}{g_{t}^{\rho}(\zeta )^2}\right)} \cdot
\frac
{\dstyle
	\phi[\tau](\zeta) \,p_{\rho}(\zeta ,t) 
	- 
	\closure{
		\phi[\tau](\zeta)\,q_{\rho}(\zeta ,t)}
}
{\dstyle
	\phi[\tau](\zeta) \,p_{\rho}(\zeta ,t) 
	+ 
	\phi[\tau](\zeta) \,q_{\rho}(\zeta ,t)
}
\end{array}
\end{equation}
for almost all $t \in [0, T)$ and all $\zeta \in \de \D$, where 
$$
\phi[\tau](z) := \frac{(z -\tau)(1-\closure{\tau}z)}z.
$$
It follows from the fact $\phi[\tau](\zeta) = \closure{\phi[\tau](\zeta)}$ that $|\de_{\bar{z}}\Phi_{\rho}(z)/\de_{z}\Phi_{\rho}(z)| \leq k$ for almost all $z \in \Delta[(g_{t}^{\rho})]$, namely, $\Phi_{\rho}$ is a $k$-quasiconformal mapping on $\Delta[(g_{t}^{\rho})]$.

Take a constant $c <1$ as close enough to 1.
Since $\Phi_{\rho} = f_{0}^{\rho}$ is conformal on $\D$, one can find three distinct points $z_{1}, z_{2}, z_{3}$ as $d_{\CC}(f_{0}^{\rho}(z_{i}), f_{0}^{\rho}(z_{j}))$, $i, j = 1,2,3$, $i \neq j$, is greater than some constant $d>0$ for all $\rho \in (c,1)$, where $d_{\CC}$ stands for the spherical distance.
Hence by normality (remark that $\Delta[(g_{t}^{\rho})] = \C$ if and only if $\Delta[(g_{t})] =\C$) $\Phi$ is $k$-quasiconformal on $\Delta[(g_{t})]$.
Accordingly, for all $t \in [0,T)$ $f_{t}$ and $g_{t}$ have continuous extensions to $\closure{\D}$.
\end{proof}

\begin{thm}
\label{constfunction-boundary}
Under the statement of Theorem \ref{main01}, additionally we assume that $\tau$ is a boundary fixed point of $\D$.
Then we obtain the same consequence as Theorem \ref{main01}. 
\end{thm}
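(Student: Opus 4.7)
The plan is to approximate the boundary fixed point $\tau \in \de\D$ by interior points $\tau_n \in \D$ and reduce to the interior case Theorem \ref{constfunction-inner} via the approximation machinery of Section 3.2. Concretely, set $\tau_n := (1-1/n)\tau$ for each $n \in \N$, and keep $p, q \in \HF$ unchanged. Define the Herglotz vector fields
\[
G_n(z,t) := (z - \tau_n)(1 - \closure{\tau_n}z)\,p(z,t), \qquad H_n(z,t) := (z - \tau_n)(\closure{\tau_n}z - 1)\,q(z,t),
\]
together with the analogous limits $G$ and $H$ built from $\tau$. Since the polynomial factors converge uniformly on $\D$ and the Herglotz functions $p, q$ are locally bounded on $\D$ by locally integrable-in-$t$ functions, the sequences $\{G_n(\,\cdot\,,t)\}_n$ and $\{H_n(\,\cdot\,,t)\}_n$ form normal families at almost every $t$, and $G_n \to G$, $H_n \to H$ weakly in the sense of Lemma \ref{HVweaktoEF}.

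Applying Lemma \ref{HVweaktoEF} on the forward side and its direct analogue on the reverse side, followed by Lemma \ref{seqEVtoLC} on the associated range-normalized (decreasing) Loewner chains, yields locally uniform convergence
\[
\varphi^n_{s,t} \to \varphi_{s,t}, \quad \omega^n_{s,t} \to \omega_{s,t}, \quad f^n_t \to f_t, \quad g^n_t \to g_t
\]
on $\D$. Since each $\tau_n$ lies in $\D$, Theorem \ref{constfunction-inner} delivers, for every $n$, a $k$-quasiconformal homeomorphism $\Phi_n : \Delta[(g^n_t)] \to \Omega[(f^n_t)]$ coinciding with $f^n_0$ on $\D$ and satisfying $\Phi_n\bigl(1/\closure{g^n_t(e^{i\theta})}\bigr) = f^n_t(e^{i\theta})$ for all $t \in [0,T)$ and $\theta \in [0,2\pi)$.

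Because $\Phi_n|_\D = f^n_0 \to f_0$ locally uniformly, three points of the images $\Phi_n(\D)$ maintain pairwise spherical distance bounded below, so $\{\Phi_n\}$ is a normal family in the class of $k$-quasiconformal mappings. The standard compactness theorem for $k$-quasiconformal mappings (\cite{LehtoVirtanen:1973}) produces a subsequential limit $\Phi$ which is again $k$-quasiconformal and extends $f_0$. Passing to the limit in the identity $\Phi_n(1/\closure{g^n_t(e^{i\theta})}) = f^n_t(e^{i\theta})$ with the help of the locally uniform convergence of $g^n_t$ and $f^n_t$, and the uniform quasiconformality of $\Phi_n$, one recovers continuous extensions of $f_t$ and $g_t$ to $\closure\D$ for each $t \in [0, T)$ and the formula \eqref{Phi}, establishing that the natural domain of $\Phi$ is $\Delta[(g_t)]$ and its image is $\Omega[(f_t)]$.

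The main obstacle lies in this last step. In the approximating problem the Denjoy--Wolff point sits strictly inside $\D$, so each $\Phi_n$ is automatically defined on a neighborhood of $\closure\D$, whereas in the limit the neutral fixed point $\tau$ sits on $\de\D$ and the structure of $f_t$, $g_t$ degenerates exactly there. Carefully controlling the Carath\'eodory-type convergence $\Delta[(g^n_t)] \to \Delta[(g_t)]$ near $\tau$, and ruling out concentration of the dilatation of $\Phi_n$ at $\tau$, is the delicate point; it is precisely the uniform dilatation bound $k < 1$ that prevents such pathology.
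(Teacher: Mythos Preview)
Your strategy is genuinely different from the paper's and, with a little more care in the limit step, it works. The paper proves the boundary case by a direct construction: it passes to the right half-plane via a Cayley map, replaces the radial dilation $\nu[\rho]$ of the interior case by the horizontal shift $\zeta\mapsto\zeta+\rho$, uses the Julia--Wolff--Carath\'eodory theorem (in place of the Schwarz--Pick argument) to establish injectivity of $\Phi_\rho$, computes the Beltrami coefficient explicitly, and then lets $\rho\to0$ exactly as in Theorem~\ref{constfunction-inner}. Your route---approximate $\tau\in\partial\D$ by $\tau_n\in\D$ and invoke Theorem~\ref{constfunction-inner} together with Lemmas~\ref{HVweaktoEF}--\ref{seqEVtoLC}---is essentially the same mechanism the paper itself employs later in \S\ref{generalcase} to pass from step functions to measurable $\tau$. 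In fact your observation shows that the boundary case need not be treated separately at all: one could approximate an arbitrary measurable $\tau:[0,\infty)\to\overline{\D}$ by step functions with values in $\D$ and go directly from Theorem~\ref{constfunction-inner} to Theorem~\ref{main01}. What the paper's approach buys is an explicit Beltrami coefficient \eqref{beltrami} at each stage and a self-contained treatment of the constant-$\tau$ cases independent of the approximation machinery.

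One point in your write-up needs correction. You propose to recover the formula \eqref{Phi} by ``passing to the limit in the identity $\Phi_n(1/\overline{g^n_t(e^{i\theta})})=f^n_t(e^{i\theta})$'' using the locally uniform convergence of $f^n_t$ and $g^n_t$. But that convergence is only on compacta of $\D$, not on $\overline{\D}$, so the boundary values are not a~priori controlled. The clean fix is the one implicit in \S\ref{generalcase}: for each fixed $t\in[0,T)$ restrict $\Phi_n$ to $D^n_t:=\{1/\overline{w}:w\in\CC\setminus\overline{g^n_t(\D)}\}$, use Carath\'eodory kernel convergence $D^n_t\to D_t$ together with normality of $k$-quasiconformal maps (three image points stay apart since $\Phi_n|_\D=f^n_0\to f_0$) to obtain a $k$-quasiconformal limit on $D_t$, and then let $t\uparrow T$. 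The continuous extensions of $f_t,g_t$ to $\overline{\D}$ and the boundary formula \eqref{Phi} are then \emph{consequences} of the quasiconformal extension (quasidisks have locally connected boundary), not inputs to it. Alternatively, since each $f^n_t$ and $g^n_t$ already admits a $k$-quasiconformal extension by Theorem~\ref{constfunction-inner}, normality of those extensions upgrades the convergence to uniform on $\overline{\D}$, after which your direct limit argument goes through.
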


\begin{proof}
By some rotation, it is enough to consider the case when $\tau =1$.
In order to make our discussion simple, we transfer everything to the right half-plane by a conjugation with a Cayley map $C(z) = (1+z)/(1-z)$ (see \cite{HottaGumenyuk}).
For example, a family of holomorphic functions $(\phi_{s,t})_{0 \le s \le t < \infty}$ on the right half-plane $\H$ is said to be an \textit{evolution family} if $K^{-1} \circ \phi_{s,t} \circ K \in \EF$.
A corresponding Herglotz vector field $G_{\H}$ is given by the relation $(d/dt)\phi_{s,t}(z) = G_{\H}(\phi_{s,t}, t)$.
Then \eqref{HVtoBP} is written as
\begin{equation*}\label{HerglotzH}
	G_{\H}(\zeta,t)=K'(K^{-1}(\zeta))G(K^{-1}(\zeta),t)=p_\H(\zeta,t)
\end{equation*}
for all $t\ge0$ and all~$\zeta\in\H$, where $p_{\H}(\zeta,t) = 2p(K^{-1}(\zeta), t)$ stands for the right half-plane model of Herglotz function.
Accordingly \eqref{LdLDE}, \eqref{LddecLDE} and \eqref{Phi} are written as
\begin{equation*}\label{EvolutionH}
	\de_{t}f_{t}(\zeta) = -\de_{z}f_{t}(\zeta)p_{\H}(\zeta,t),\hspace{15pt}
	\de_{t}g_{t}(\zeta) = \de_{z}g_{t}(\zeta)q_{\H}(\zeta,t),
\end{equation*}
and
\begin{equation*}
\left\{
\begin{array}{lll}
\dstyle \Phi(\zeta) = f_{0}(\zeta), & \zeta \in \H,\\[3pt]
\dstyle \Phi\left(-\closure{g_{t}(iy)}\right) = f_{t}(iy), &y \in \R \hspace{5pt}\textup{and}\hspace{5pt}t \in [0,T).
\end{array}
\right.
\end{equation*}

For $\rho \in (0,1)$, define $f_{t}^{\rho}(\zeta) := f_{t}(\zeta + \rho)$ and $g_{t}^{\rho}(\zeta) := g_{t}(\zeta + \rho)$.
Then $(f_{t}^{\rho}) \in \LC$ with $(p_{\H}(\zeta + \rho,t), 1) \in \BP$ and $(g_{t}^{\rho}) \in \DLC$ with $(q_{\H}(\zeta + \rho,t), 1) \in \BP$. 
Thus $(f_{t}^{\rho})$ and $(g_{t}^{\rho})$ satisfy all the assumption of the theorem.
Further, one can see that for all $t \ge 0$ $f_{t}^{\rho}$ and $g_{t}^{\rho}$ have continuous extensions to $\closure{\D}$.
Hence
\begin{equation}\label{Phi-upper-rho}
\left\{
\begin{array}{lll}
\dstyle \Phi_{\rho}(\zeta) = f_{0}^{\rho}(\zeta), & \zeta \in \H,\\[3pt]
\dstyle \Phi_{\rho}\left(-\closure{g_{t}^{\rho}(iy)}\right) = f_{t}^{\rho}(iy), &y \in \R \hspace{5pt}\textup{and}\hspace{5pt}t \in [0,T).
\end{array}
\right.
\end{equation}
is well-defined.

In principle, we follow the similar lines of the proof of Theorem \ref{constfunction-inner} using the Julia-Wolff-Carath\`eodory Theorem (e.g. \cite[Section 1.3]{ElinShoikhet:2010}).
We again use the notations $I_{A}, I_{B} \subset [0,\infty)$ and $D_{A}, D_{B} \subset \C$ in the proof of Theorem \ref{constfunction-inner}.
In this case $D_{A}$ is understood as a union of all boundary curves of $1/\closure{g_{t}^{\rho}(\zeta)}$ for all $t \in I_{A}$ and the right half-plane $\H$.

What we will prove first is that injectivity of $\Phi_{\rho}$ on $D_{A}$.
Take two points $\zeta_{1}, \zeta_{2} \in D_{A}\backslash \H$. 
We prove that the equation $-\closure{\zeta_{1}} := g_{t_{1}}^{\rho}(iy_{1})$ determines unique $y_{1} \in \R$ and $t_{1}  \in I_{A}$.
It is easy to see that for any fixed $t_{1} \in I_{A}$, $y_{1} \in \R$ is determined uniquely because the curve $\{g_{t_{1}}^{\rho}(iy) : y \in \R\}$ is Jordan.
If there exists another $t_{1}^{*} \in I_{A},\, t_{1}^{*} > t_{1},$ such that $-\closure{\zeta_{1}} = g_{t_{1}^{*}}^{\rho}(iy_{1})$, then 
$$
\Psi_{t_{1}, t_{1}^{*}}(iy_{1} + \rho)
=(g_{t_{1}}^{-1} \circ g_{t_{1}^{*}}) (iy_{1} + \rho)
=(g_{t_{1}}^{-1} \circ g_{t_{1}}) (iy_{1} + \rho)
= iy_{1} + \rho.
$$
Therefore $\Re \Psi_{t_1,t_1^*} (\zeta_{0}) = \Re\zeta_{0}$, where $\zeta_{0} := iy_{1} + \rho$.
Hence by the Julia-Wolff-Carath\`eodory Theorem, $\Psi_{t_{1}, t_{1}^{*}}'(\infty) =1$.
Further, since the equality holds at the internal point $\zeta_{0}$, by the maximum modulus principle $\Re \Psi_{t_1,t_1^*} (\zeta) = \Re \zeta$ for all $\zeta \in \H$.
Thus $\Psi_{t_{1}, t_{1}^{*}}(\zeta) := \zeta$ and hence $p(\H, t) =0$ for all $t \in [t_{1}, t_{1}^{*}]$.
It contradicts our assumption.

Then suppose that there exists another $t_1^* \in I_{A},\,t_{1}^{*} \neq t_{1}$, and $y_{1}^* \in \R,\,y_{1}^{*} \neq y_{1}$, such that $-\closure{z_{1}} = g_{t_{1}^{*}}^{\rho}(iy_{1}^{*})$.
We may assume that $t_1^* > t_1$.
Since $g_{t_1}^{\rho}(iy_{1}) = g_{t_1^*}^{\rho}(iy_{1}^{*})$, we have $\Psi_{t_1,t_1^*}(iy_{1}^{*} + \rho) = (g_{t_1}^{-1} \circ g_{t_1^*})(iy_{1}^{*} + \rho) =  (g_{t_1}^{-1} \circ g_{t_1})(iy_{1} + \rho) = iy_{1} + \rho$.
By the same argument as above, we obtain $\Psi_{t_{1}, t_{1}^{*}}(\zeta) := \zeta + i(y_{1}^{*} -y_{1})$ and hence $p(\H, t) =0$ for all $t \in [t_{1}, t_{1}^{*}]$ which again contradicts our assumption.

Now, there exist unique $t_1, t_2 \in I_{A}$ and $y_1, y_2 \in \R$ such that $-\closure{\zeta_1} = g_{t_1}^{\rho}(iy_{1})$ and $-\closure{\zeta_2} = g_{t_2}^{\rho}(iy_{2})$.
We may suppose $t_1 \le t_2$.
If $t_{1} =t_{2}$, then since $f_{t_{1}}^{\rho}(i\R)$ is Jordan, $\Psi_{\rho}(\zeta_{1}) \neq \Psi_{\rho}(\zeta_{2})$.
Then suppose $t_{1} < t_{2}$.
If $\Psi_{\rho}(\zeta_{1}) = \Psi_{\rho}(\zeta_{2})$, i.e., $f_{t_{1}}^{\rho}(\zeta_{1}) = f_{t_{2}}^{\rho}(\zeta_{2})$, then $\Psi_{t_{1}, t_{2}}(\zeta_{1} + \rho) = (f_{t_{2}}^{-1} \circ f_{t_{1}})(\zeta_{1} + \rho) = (f_{t_{2}}^{-1} \circ f_{t_{2}})(\zeta_{2} + \rho) = \zeta_{2} + \rho$.
Therefore $\Psi_{t_{1}, t_{2}}(\zeta) = \zeta + i(y_{2}-y_{1})$, and hence $p(\H, t) =0$.
Consequently $\Phi_{\rho}$ is injective on $D_{A}$.

Injectivity of $\Phi_{\rho}$ on $D_{B}$ is obtained by the same argument as the proof of Theorem \ref{constfunction-inner}.
Hence $\Phi_{\rho} : \Delta[(g_{t})] \to \Omega[(f_{t})]$ is injective. 
We conclude that $\Phi_{\rho}$ defines a homeomorphism on $\Delta[(g_{t})]$ onto $\Omega[(f_{t})]$.

Differentiating both sides of \eqref{Phi-upper-rho} with respect to $t$ and $y$ we have
\begin{equation*}
\left\{
\begin{array}{lll}
\dstyle
	\de_{z}\Phi_{\rho} \cdot 
	(\closure{-\de_{t} g_{t}^{\rho}(iy)})
	+ 
	\de_{\bar{z}}\Phi_{\rho} \cdot
	(-\de_{t} g_{t}^{\rho}(iy))
	&=&
	\de_{t}f_{t}^{\rho}(iy) \\[3pt]
\dstyle
	\de_{z}\Phi_{\rho} \cdot
	(\closure{-i \de_{z}g_{t}^{\rho}(iy)})
	+
	\de_{\bar{z}}\Phi_{\rho} \cdot
	(-i\de_{z}g_{t}^{\rho}(iy))
	&=&
	\de_{z}f_{t}^{\rho}(iy),
\end{array}
\right.
\end{equation*}
and hence
\begin{eqnarray*}
\frac{\de_{\bar{z}}\Phi_{\rho}}{\de_{z}\Phi_{\rho}}\left(-\closure{g_{t}^{\rho}(iy)}\right)
 &=& 
\left(
	\frac{ \closure{ \de_{t}g_{t}^{\rho}(iy) } }{\de_{t}g_{t}^{\rho}(iy)}\cdot
\right)
	\frac
		{\dstyle \left(-\frac{\de_{t}f_{t}^{\rho}(iy)}{\de_{z}f_{t}^{\rho}(iy)}\right) - \closure{ \left( \frac{\de_{t}g_{t}^{\rho}(iy)}{\de_{z}g_{t}^{\rho}(iy)} \right) } }
		{\dstyle \left(-\frac{\de_{t}f_{t}^{\rho}(iy)}{\de_{z}f_{t}^{\rho}(iy)}\right) + \left( \frac{\de_{t}g_{t}^{\rho}(iy)}{\de_{z}g_{t}^{\rho}(iy)} \right) }\\
&=&
\left(
	\frac{ \closure{ \de_{t}g_{t}^{\rho}(iy) } }{\de_{t}g_{t}^{\rho}(iy)}\cdot
\right)
	\frac
		{ p_{\H}(iy + \rho, t) - \closure{ q_{\H}(iy+\rho, t) } }
		{ p_{\H}(iy + \rho, t) + q_{\H}(iy+\rho, t) }.
\end{eqnarray*}

\no
Thus $|\de_{\bar{z}}\Phi_{\rho}(\zeta)/\de_{z}\Phi_{\rho}(\zeta)| \le k$ for almost all $\zeta \in \Delta[(g_{t}^{\rho})]$ and hence $\Phi_{\rho}$ is $k$-quasiconformal there.
By the same argument as in the proof of Theorem \ref{constfunction-inner}, we conclude that $\Phi : \Delta[(g_{t})] \to \Omega[(f_{t})]$ is $k$-quasiconformal. The proof is complete.
\end{proof}

\subsection{The case when $\tau$ is a step function}
\label{stepsubsection}

Next, we assume that $\tau \in \DW$ is a step function, that is, $\tau$ is of the form
$$
\tau (t) = \sum_{i=1}^{n}\tau_{i}\cdot\chi_{I_{i}}(t),
$$
where $\tau_{i} \in \closure{\D},\,n \in \N$, $0=t_0 < t_1  < \cdots < t_{n-1} <  t_{n} = T$, $I_i := [t_{i-1}, t_i)$ and $\chi_{I}$ is a characteristic function.

\begin{thm}
\label{stepfunction}
Under the statement of Theorem \ref{main01}, additionally we assume that $\tau$ is a step function (in the above sense).
Then we obtain the same consequence as Theorem \ref{main01}.
\end{thm}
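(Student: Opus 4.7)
The plan is to argue by induction on $n$, the number of pieces of $\tau$. The base case $n=1$ reduces directly to Theorem~\ref{constfunction-inner} when $\tau_{1}\in\D$ and to Theorem~\ref{constfunction-boundary} when $\tau_{1}\in\de\D$. Assuming the statement for all step functions with at most $n-1$ pieces, I will split the chain at $t=t_{1}$ into a head on $[0,t_{1}]$ with constant $\tau\equiv\tau_{1}$ and a tail on $[t_{1},T)$ whose shifted Denjoy--Wolff function is a step function with only $n-1$ pieces, handle the two subproblems by the constant-$\tau$ theorem and the inductive hypothesis respectively, and then glue them into a single $k$-quasiconformal $\Phi$ on $\Delta[(g_{t})]$.

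\textbf{Head and tail reductions.} Promote the head piece to a global Loewner chain by setting $\tilde f_{t}:=f_{t_{1}}$ for $t\ge t_{1}$; the associated Berkson--Porta data has $\tilde\tau\equiv\tau_{1}$ and $\tilde p\equiv0$ on $[t_{1},\infty)$, and inequality~\eqref{maininequality} then forces $\tilde q\equiv0$ there as well. Consequently $\tilde g_{t}=g_{t}$ for $t\le t_{1}$ and $\tilde g_{t}=g_{t_{1}}$ afterwards, so $\tilde\Delta=J(\CC\setminus\closure{g_{t_{1}}(\D)})$ with $J(w):=1/\bar w$. Theorem~\ref{constfunction-inner} or~\ref{constfunction-boundary} applied to $(\tilde f_{t})$ yields a $k$-quasiconformal extension $\tilde\Phi$ of $f_{0}$ on $\tilde\Delta$ which agrees with~\eqref{Phi} for $t\in[0,t_{1}]$. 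For the tail, the reparametrized chain $\hat f_{s}:=f_{t_{1}+s}$ has step-function Denjoy--Wolff data with $n-1$ pieces and still satisfies~\eqref{maininequality}, so the inductive hypothesis furnishes a $k$-quasiconformal extension $\hat\Phi$ of $f_{t_{1}}$ on $\hat\Delta:=\Delta[(\hat g_{s})]$.

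\textbf{Assembly.} The cocycle relation $g_{t}=g_{t_{1}}\circ\hat g_{t-t_{1}}$ for $t\ge t_{1}$ follows from property REF2 of reverse evolution families, and a direct substitution into~\eqref{Phi} yields
\begin{equation*}
\Phi(z)=\begin{cases} \tilde\Phi(z), & z\in\tilde\Delta,\\[3pt] \hat\Phi(H(z)), & z\in\Delta[(g_{t})]\setminus\closure{\tilde\Delta}, \end{cases}
\end{equation*}
where $H(z):=1/\closure{g_{t_{1}}^{-1}(1/\bar z)}=J\circ g_{t_{1}}^{-1}\circ J$. Since $H$ is the composition of two antiholomorphic inversions with a holomorphic inverse, $H$ is itself holomorphic on its domain, so $\hat\Phi\circ H$ inherits the $k$-dilatation bound of $\hat\Phi$. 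Thus $\Phi$ is $k$-quasiconformal on each of the two open subregions above and continuous on their common boundary $\gamma_{1}:=J(g_{t_{1}}(\de\D))$.

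\textbf{Main obstacle --- gluing across $\gamma_{1}$.} The hardest step is upgrading the piecewise dilatation bound to a global one across the separating Jordan curve. A generic Jordan curve need not be quasiconformally removable, but in our situation $\gamma_{1}=\de\tilde\Delta$ bounds the $k$-quasidisk $\tilde\Delta$ delivered by the head subproblem, so $\gamma_{1}$ is a quasi-circle; removability of quasi-circles for $k$-quasiconformal homeomorphisms then promotes $\Phi$ to a $k$-quasiconformal map on the whole of $\Delta[(g_{t})]$. The required continuous boundary extensions of $f_{t}$ and $g_{t}$ to $\closure{\D}$ for $t\in[0,T)$ are inherited from the two subproblems.
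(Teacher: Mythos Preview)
Your argument is essentially the paper's own proof, repackaged as an induction: the paper also splits $[0,T)$ into the constant-$\tau$ intervals $I_i$, applies Theorems~\ref{constfunction-inner}/\ref{constfunction-boundary} on each piece to get $k$-quasiconformal maps on the successive shells, and glues them across the curves $J(g_{t_i}(\partial\D))$. Where you do a head/tail induction and transplant the tail via the conformal map $H=J\circ g_{t_1}^{-1}\circ J$, the paper simply iterates forward through $I_1,I_2,\dots$ and re-invokes the \emph{argument} (not just the statement) of the constant-$\tau$ theorems on each annular shell; the content is the same.

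One point in your write-up needs an extra line. You claim $\gamma_1$ is a quasicircle because ``$\tilde\Delta$ is the $k$-quasidisk delivered by the head subproblem'', but a $k$-qc homeomorphism $\tilde\Phi:\tilde\Delta\to f_{t_1}(\D)$ between two a~priori arbitrary simply-connected domains does not by itself make either a quasidisk. What does the job is exactly the paper's remark that $\Phi_1$ furnishes a $k$-quasiconformal extension of $g_{t_1}$ to all of $\C$: put $G_1:=g_{t_1}$ on $\D$ and $G_1:=J\circ\tilde\Phi^{-1}\circ f_{t_1}\circ J$ on $\CC\setminus\overline{\D}$; letting $t\to t_1^{-}$ in \eqref{Phi} shows these agree on $\partial\D$, and since $\partial\D$ is removable, $G_1$ is a global $k$-qc map. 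Hence $g_{t_1}(\D)$ is a quasidisk, $\gamma_1=J(\partial g_{t_1}(\D))$ is a quasicircle, and your removability step goes through. With this addition your proof is complete and coincides with the paper's.
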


\begin{proof}
With no loss of generality, we may assume that $I_{i} \backslash I_{B}$ is not a null-set for all $i = 1, \cdots, n$.

Firstly consider the case when $t \in I_{1}$.
Then applying the same argument as Theorem \ref{constfunction-inner} if $\tau_{1} \in \D$ or Theorem \ref{constfunction-boundary} if $\tau_{1} \in \de\D$ for $(f_{t})$ and $(g_{t})$ on $t \in I_{1}$, one can verify that the map $\Phi_{1}$ defined by
$$
\left\{
\begin{array}{lll}
\dstyle \Phi_1(z) = f_{0}(z), & z \in \D,\\[3pt]
\dstyle \Phi_1\left(1/\closure{g_{t}(e^{i\theta})}\right) = f_{t}(e^{i\theta}), &\theta \in [0,2\pi)\hspace{5pt}\textup{and}\hspace{5pt}t \in I_1,
\end{array}
\right.
$$
is a $k$-quasiconformal mapping on $\{1/\closure{w} : w \in \CC\backslash \closure{g_{t_1}(\D)}\}$ maps to $f_{t_1}(\D)$. 
We remark that the above mapping gives a quasiconformal extension of $g_{t_{1}}$ to the unit disk $\D$ (and hence $\C$).
Therefore $g_{t_{1}}$ has a continuous injective extension to $\closure{\D}$.

Next, let $t \in I_{2}$.
Then again by means of the same argument as Theorem \ref{constfunction-inner} or Theorem \ref{constfunction-boundary}, $\Phi_{2}$ given by
$$
\Phi_2\left(1/\closure{g_{t}(e^{i\theta})}\right) = f_{t}(e^{i\theta}), \hspace{10pt}\theta \in [0,2\pi)\hspace{10pt}\textup{and}\hspace{10pt}t \in I_2,
$$
define a $k$-quasiconformal mapping on $\{1/\closure{w} : w \in g_{t_2}(\D) \backslash \closure{g_{t_{1}}(\D)}\}$ maps to $f_{t_2}(\D)\backslash \closure{f_{t_{1}}(\D)}$. 
Further, since $\Phi_{2}$ gives a quasiconformal extension of $f_{t_{1}}$ to $f_{t_{2}}(\D)$, $f_{t_{1}}$ has a continuous injective extension to $\closure{\D}$.
Hence the well-defined mapping $\Phi$,
$$
\Phi(z) := \left\{
\begin{array}{lll}
\dstyle \Phi_1(z), & z \in \{1/\closure{w} : w \in \CC \backslash \closure{g_{t_{1}}(\D)}\},\\[3pt]
\dstyle \Phi_2(z), & z \in \{1/\closure{w} : w \in g_{t_2}(\D) \backslash \closure{g_{t_{1}}(\D)}\},
\end{array}
\right.
$$
is a homeomorphism of $\{1/\closure{w} : w \in \C \backslash g_{t_{2}}(\D)\}$ onto $f_{2}(\D)$ and is $k$-quasiconformal there.
Repeating this argument, our proof is complete.
\end{proof}

\subsection{The case when $\tau$ is a measurable function}
\label{generalcase}

Now we are ready to prove Theorem \ref{main01}.

\begin{proof}[\textbf{Proof of Theorem \ref{main01}}]
We may assume that $(f_{t}) \in \LC_{0}$, i.e., range-normalized.
Since $\tau$ is measurable, there exists a sequence of step functions $\{\tau_{n}\} \subset \DW$ converging weakly to $\tau$.
For each $n \in \N$, $(p, \tau_{n}) \in \BP$ associates unique $G_{n} \in \HV$ and $(\varphi_{s,t}^{n}) \in \EF$, and $(q, \tau_{n})$ associates unique $G^{*}_{n} \in \HV$ and $(\omega_{s,t}^{n}) \in \REF$.
Let $(f_{t}^{n}) \in \LC_{0}$ being uniquely associated with $(\varphi_{s,t}^{n}) \in \EF$ and $(g_t^{n}) :=(\omega_{0,t}^{n}) \in \DLC$.
By Theorem \ref{stepfunction}, $\Phi_{n}$ defined by 
\begin{equation}\label{Phi_n}
\left\{
\begin{array}{lll}
\dstyle \Phi_{n}(z) = f_{0}^{n}(z), & z \in \D,\\[3pt]
\dstyle \Phi_{n}\left(\frac{1}{\closure{g_{t}^{n}(\zeta)}}\right) = f_{t}^{n}(\zeta), &\zeta \in \de\D\hspace{5pt}\textup{and}\hspace{5pt}t \in [0,T),
\end{array}
\right.
\end{equation}
is a $k$-quasiconformal map on $\Delta[(g_{t}^{n})]$.

By assumption, $p(\D,t)$ is bounded for almost all $t \in [0,T]$.
Hence $\{G_{n}(\,\cdot\,, t) : n \in \N, \mathrm{a.e.}~t \in [0,T]\}$ is normal.
Further, $\{G_{n}(z,\,\cdot\,)\}_{n \in \N}$ converges weakly to $G(z,\,\cdot\,)$ for all fixed $z \in \D$.
Applying Lemma \ref{HVweaktoEF}, we have $(\varphi_{s,t}^{n}) \to (\varphi_{s,t})$ locally uniformly on $(z,t) \in \D \times [s,\infty)$.
By Lemma \ref{seqEVtoLC}, $(f_{t}^{n}) \to (f_{t})$ locally uniformly on $z \in \D$ for each $t \in [0,T)$.
The same argument is applied for the decreasing counterpart, i.e., $(\omega_{s,t}^{n}) \to (\omega_{s,t})$ locally uniformly on $(z,t) \in \D \times [s,\infty)$. In particular $(g_{t}^{n})\to (g_{t})$ locally uniformly on $(z,t) \in \D \times [0, T)$.

Now, take an arbitrary $t \in [0, T)$ and fixed.
Let $D_{n} := \{1/\closure{w} : w \in \CC\backslash \closure{g_{t}^{n}(\D)}\}$,
and consider conformal maps $h_{n} : \D \to D_{n}$ and $k$-quasiconformal maps $\Psi_{n} :=\Phi_{n}\!\!\mid_{D_{n}}$.
Since $g_{t}^{n} \to g_{t}$ locally uniformly on $\D$ and $g_{t}$ is conformal on $\D$,
there exists $N \in\N$ and an open set $K$ such that $K \subset g_{t}^{n}(\D)$ for all $n > N$.
Since $D_{n} \cap \{1/\bar{z} : z \in K\} = \emptyset$ for all $n >N$, by Montel's theorem $\{h_{n}\}_{n}$ forms a normal family.
Then one can find a subsequence $\{h_{n_{j}}\}_{j} \subset \{h_{n}\}$ converging locally uniformly to a conformal map $h :\D \to D := \{1/\closure{w} : w \in \CC\backslash \closure{g_{t}(\D)}\}$.
Applying a similar argument to $\{\Psi_{n_{j}}\circ h_{n_{j}}\}_{j}$, it is also normal, and hence there is a subsequence (we keep the same notation for the subsequence) converging to a $k$-quasiconformal map on $\D$ (see \cite[Theorem II-5.2]{LehtoVirtanen:1973}).
It shows that the limit function $\Psi := \lim_{j\to \infty}\Psi_{n_{j}}$ is defined on $D$, equal to $\Phi\!\!\mid_{D}$ and $k$-quasiconformal there.

Since $t$ is arbitrary, the first assertion is proved.
It also concludes that $f_{t}$ and $g_{t}$ have continuous extensions to $\closure{\D}$ for all $t \in [0, T)$.
\end{proof}

	%
\section{Proof of Theorem \ref{openingthm} and further results}\label{Section4}

\subsection{Proof of Theorem \ref{openingthm}}
Various corollaries are deduced from the results and the arguments in Section 3.

\begin{lem}\label{(1,tau=constant)}
Let $\tau \in \closure{\D}$ be a constant. Then we have the followings;
\begin{enumerate}
\item For $(f_{t}) \in \LC$ being associated with $(1,\tau) \in \BP$, $\Omega[(f_{t})] = \C$.
\item For $(g_{t}) \in \DLC$ being associated with $(1,\tau) \in \BP$, $\Lambda[(g_{t})]$ consists of a single point in $\closure{\D}$.
\end{enumerate}
\end{lem}

\begin{proof}
$(f_{t}^{\tau})$ and $(g_{t}^{\tau})$ associated with $(1, \tau)$ is explicitly given by
$$
f_{t}^{\tau} (z):=
\frac
{\dstyle\left(\frac{z-\tau}{1-\bar{\tau}z}\right)e^{t(1-|\tau|^{2})} + \tau}
{\dstyle 1 +\bar{\tau}\left(\frac{z-\tau}{1-\bar{\tau}z}\right)  e^{t(1-|\tau|^{2})}}, \hspace{15pt}
g_{t}^{\tau} (z):= f_{-t}^{\tau} (z) 
$$
if $\tau \in \D$, and
$$
f_{t}^{\tau} (z):=
\frac{\tau +z}{\tau-z} -2t, \hspace{15pt}
g_{t}^{\tau} (z):=
\tau \cdot
\frac
{\dstyle f_{-t}^{\tau} (z) -1}
{\dstyle f_{-t}^{\tau} (z) +1}
$$
if $\tau \in \de\D$.
It is easy to see that in both cases our assertions are satisfied.
\end{proof}

\begin{rem}
\label{remark-LR}
The same conclusion of Lemma \ref{(1,tau=constant)} is also true if $\tau : [0,\infty) \to \closure{\D}$ is a step function.
\end{rem}

\begin{lem}\label{(1,tau)}
We have the followings;
\begin{enumerate}
\item For $(f_{t}) \in \LC$ being associated with $(1,\tau) \in \BP$, $\Omega[(f_{t})] = \C$.
\item For $(g_{t}) \in \DLC$ being associated with $(1,\tau) \in \BP$, $\Lambda[(g_{t})]$ consists of a single point in $\closure{\D}$.
\end{enumerate}
\end{lem}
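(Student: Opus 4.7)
The plan is to establish part (i) by a direct logarithmic-derivative argument on the forward evolution family, and then to deduce part (ii) from part (i) by applying Theorem \ref{main01} in the degenerate case $k=0$.

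For part (i), by the equivalences in Theorem \ref{LCunique} it suffices to exhibit one $z_0\in\D$ with $\beta(z_0)=0$, where $\beta(z_0):=\lim_{t\to\infty}|\varphi_{0,t}'(z_0)|/(1-|\varphi_{0,t}(z_0)|^2)$. Since $p\equiv 1$ the Herglotz vector field is $G(z,t)=(z-\tau(t))(\closure{\tau(t)}z-1)$ and $G_z(z,t)=2\closure{\tau(t)}z-1-|\tau(t)|^2$. Take $z_0=0$, write $w(t):=\varphi_{0,t}(0)$ and $H(t):=|\varphi_{0,t}'(0)|/(1-|w(t)|^2)$. Differentiation along the orbit gives
\begin{equation*}
\frac{d}{dt}\log|\varphi_{0,t}'(0)|=\Re G_z(w,t)\le |w(t)|^2-1
\end{equation*}
via $2\Re(\closure{\tau}w)\le|\tau|^2+|w|^2$, while a parallel computation using the identity $|1-\closure{\tau}w|^2-|\tau-w|^2=(1-|\tau|^2)(1-|w|^2)$ yields
\begin{equation*}
-\frac{d}{dt}\log H(t)=(1-|\tau(t)|^2)+\frac{2|\tau(t)-w(t)|^2}{1-|w(t)|^2}.
\end{equation*}
The AM-GM inequality $\sqrt{x(x+y)}\le x+y/2$ applied with $x=|\tau-w|^2/(1-|w|^2)$ and $y=1-|\tau|^2$ controls the hyperbolic speed $|w'|/(1-|w|^2)$ by $-\tfrac{1}{2}(d/dt)\log H(t)$; integration then shows the hyperbolic distance from $0$ to $w(t)$ is at most $-\tfrac{1}{2}\log H(t)$. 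If $\beta(0)>0$, then $H(t)\ge\beta(0)$ keeps $w(t)$ in some fixed Euclidean disk $\{|z|\le r_0\}$ with $r_0<1$, so the first estimate forces $|\varphi_{0,t}'(0)|\le\exp\bigl(-(1-r_0^2)t\bigr)\to 0$, whence $H(t)\le|\varphi_{0,t}'(0)|/(1-r_0^2)\to 0$, contradicting $H(t)\ge\beta(0)$. Hence $\beta(0)=0$ and $\Omega[(f_t)]=\C$.

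For part (ii), with $p\equiv q\equiv 1$ the hypothesis $|p-\closure{q}|\le k|p+q|$ of Theorem \ref{main01} reads $0\le 2k$, which holds for every $k\in[0,1)$, and $p(\D,t)=\{1\}\not\subset i\R$ gives $T=\infty>0$. Applying Theorem \ref{main01} with $k=0$ produces a conformal (i.e.\ $0$-quasiconformal) sense-preserving homeomorphism $\Phi\colon\Delta[(g_t)]\to\Omega[(f_t)]$. By part (i), $\Omega[(f_t)]=\C$, so $\Delta[(g_t)]$ is a simply connected open subset of $\CC$ conformally equivalent to $\C$; by the Riemann uniformization theorem this forces $\Delta[(g_t)]$ to be either $\C$ itself or $\CC$ minus a single point. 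Translating through the defining formula $\Delta[(g_t)]=\{1/\closure{w}:w\in\CC\setminus\Lambda[(g_t)]\}$ and the bijectivity of the inversion $w\mapsto 1/\closure{w}$, one concludes that $\Lambda[(g_t)]$ contains at most one point; and since $\Lambda[(g_t)]=\bigcap_{t\ge 0}\closure{g_t(\D)}$ is a nonempty nested intersection of compacta, it consists of exactly one point in $\closure{\D}$.

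The main obstacle is supplying the direct asymptotic argument for part (i): the vanishing-$\beta$ statement is the genuinely new input beyond the quasiconformal-extension machinery of Section 3, and the algebraic identity above together with the $H$-monotonicity is what drives it. Once part (i) is in hand, part (ii) is essentially forced by Riemann uniformization together with the explicit dictionary between $\Delta[(g_t)]$ and $\Lambda[(g_t)]$.
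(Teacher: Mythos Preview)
Your proof is correct, and it proceeds along a genuinely different route from the paper's.

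For part (i), the paper approximates $\tau$ by step functions $\tau_n$, invokes the forward reference Lemma~\ref{loewnerrange} (constant $\tau$) to get $\Omega[(f_t^n)]=\C$, observes via Theorem~\ref{stepfunction} that the resulting $0$-quasiconformal maps $\Phi_n$ are M\"obius, hence the identity, and then passes to the limit using Lemmas~\ref{HVweaktoEF} and~\ref{seqEVtoLC}. You instead attack $\beta(0)$ directly for arbitrary measurable $\tau$ via the explicit identity
\[
-\frac{d}{dt}\log H(t)=(1-|\tau(t)|^2)+\frac{2|\tau(t)-w(t)|^2}{1-|w(t)|^2},
\]
together with the hyperbolic-speed bound $\sqrt{x(x+y)}\le x+y/2$. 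This is more elementary: it needs no approximation, no forward reference to Lemma~\ref{loewnerrange}, and no compactness of quasiconformal families. It also yields the pleasant monotonicity statement that the Schwarz--Pick derivative $H(t)$ decays at least as fast as $e^{-2\,\eta_\D(0,w(t))}$ along the orbit.

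For part (ii), both arguments rely on the $k=0$ case of the main extension theorem to produce a conformal $\Phi:\Delta[(g_t)]\to\Omega[(f_t)]$. The paper obtains $\Lambda[(g_t)]=\{\text{pt}\}$ simultaneously with (i) from $\Phi_n=\mathsf{id}$; you instead feed your already-established (i) into Theorem~\ref{main01} and finish with uniformization. One small remark: you assert that $\Delta[(g_t)]$ is simply connected, which is true but not quite needed---any domain in $\CC$ biholomorphic to $\C$ automatically omits at most one point (a domain omitting two points is hyperbolic), so the conclusion $\#\Lambda[(g_t)]\le 1$ follows without that hypothesis. The nonemptiness of $\Lambda[(g_t)]$ as a nested intersection of compacta then gives exactly one point, as you say.
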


\begin{proof}
We may assume that $(f_{t}) \in \LC_{0}$.
Let $\{\tau_{n}\}_{n \in \N}$ be a sequence of step functions converging weakly to $\tau$.
Let $(f_{t}^{n}) \in \LC_{0}$ and $(g_{t}^{n}) \in \DLC$ associated with $(1, \tau_{n}) \in \BP$.
By Remark \ref{remark-LR}, $\Omega[(f_{t}^{n})] = \C$ and $\Lambda[(g_{t}^{n})]$ consists of a single point in $\closure{\D}$ for all $n \in \N$.
By Theorem \ref{stepfunction}, one can deduce that $\Phi_{n}$, a map defined by $(f_{t}^{n})$ and $(g_{t}^{n})$ as in Section \ref{stepsubsection}, is a 0-quasiconformal automorphism of $\CC$, i.e., a M\"obius transformation.
Since $f_{0} \in \SS$, $\Phi^{n} = \mathsf{id}_{\CC}$ for all $n \in \N$.
Hence a local uniform limit $\Phi$ of $\{\Phi_{n}\}$ is $\mathsf{id}_{\C}$.
This proves that $f_{t}(\D)$ tends to $\C$ as $t \to \infty$. 
Accordingly, $\Lambda[(g_{t})]$ consists of a single point in $\closure{\D}$.
\end{proof}

\begin{proof}[\textbf{Proof of Theorem \ref{openingthm}}]
Proof of (i): Consider the decreasing Loewner chain $(g_{t})$ as the one associated with the Berkson-Porta data $(1, \tau) \in \BP$.
Applying Theorem \ref{main01}, we immediately obtain the first assertion of the theorem.

Proof of (ii): We employ the same idea as the proof of \cite[Theorem 3.5]{HottaGumenyuk}. 
Let us fix $s \ge 0$ and $t \ge s$.
Then one defines $(\tilde{f}_{a})_{a \ge 0}$ by
\begin{equation*}
\tilde{f}_{t}(z) := \left\{
\begin{array}{llll}
\varphi_{s+a, t}(z), & a \in [0,t),\\
e^{a-t} z, & a \in [t, \infty).
\end{array}
\right.
\end{equation*}
By the definition, $(\tilde{f}_{a})_{a \ge 0}$ is a Loewner chain.
Since $\varphi_{s+a, t} = f_{t}^{-1} \circ f_{s+a}$, $\tilde{f}_{t}$ satisfies
$$
\frac{\de \tilde{f}_{a}(z)}{\de a} = \frac{\de \tilde{f}_{a}(z)}{\de z}(z - \tau(s+a))(1-\closure{\tau(s+a)} z) p(z, s+a)
$$
of all $z \in \D$ and almost all $a \in [0,t)$, a Herglotz function $\tilde{p}$ associated with $(\tilde{f}_{a})$ is given by
\begin{equation*}
\tilde{p}(z,a) = \left\{
\begin{array}{llll}
p(z,a+s), & a \in [0,t),\\
1, & a \in [t, \infty).
\end{array}
\right.
\end{equation*}
By the first assertion of this theorem, we conclude that $\varphi_{s,t}$ has a $k$-quasiconformal extension to $\CC$ for each $s \ge 0$ and $t \ge s$.

Proof of (iii): By Lemma \ref{(1,tau)}, $\Delta[(g_{t})] = \CC \backslash\{a\}$ where $\{a\} := \Lambda[(g_{t})]$. 
Since $\Phi : \Delta[(g_{t})] \to \Omega[(f_{t})]$ in \eqref{Phi} is a homeomorphism, it follows that $\Omega[(f_{t})] = \C$.
\end{proof}

If $p(z,t) =q(z,t)$ for all $z \in \D$ and almost all $t \in [0,\infty)$, then inequality \eqref{maininequality} represents a sector domain.

\begin{cor}
Let $k \in [0, 1)$.
Let $(f_t) \in \LC$, $(p,\tau) \in \BP$ associated with $(f_t)$, and $(g_{t}) \in \DLC$ associated with $(p,\tau)$.
We denote by $T \in [0,\infty]$ the smallest number such that $p(\D,t)  \neq 0$ for almost all $t \in (T, \infty)$.
Suppose that 
\begin{enumerate}
\item $T > 0$;
\item There exists a compact subset of $\H$ that contains $p(\D, t)$ for almost all $t \in [0,T)$;
\item $p$ satisfies
\begin{equation*}
p(z,t) \in \closure{\left\{z : |\arg z| < \frac{k\pi }{2} \right\}}
\end{equation*}
for all $z \in \D$ and almost all $t \in [0,\infty)$.
\end{enumerate}
Then, for each $t \in [0,T)$ $f_{t}$ has a $\sin (k\pi /2)$-quasiconformal extension to $\Delta[(g_{t})]$ onto $\Omega[(f_{t})]$, where $(g_{t}) \in \DLC$ associated with $(p,\tau) \in \BP$.
\end{cor}

Choosing $(f_{t})$ as $p \equiv 1$, we obtain the following.
\begin{cor}
Let $(g_t) \in \DLC$ and $(q,\tau) \in \BP$ associated with $(g_t)$.
If $q$ satisfies
\begin{equation*}
\left|
\frac{q(z,t)-1}{q(z,t)+1}
\right| \leq k
\end{equation*}
for all $z \in \D$ and almost all $t \in [0,\infty)$. Then:
\begin{enumerate}
\item[(i)] for each $t \in [0,\infty)$, $g_{t}$ has a $k$-quasiconformal extension to $\CC$;
\item[(ii)] for each $s \in [0,\infty)$ and $t \in [s,\infty)$, $\omega_{s,t}:= g_{s}^{-1} \circ g_{t}$ has a $k$-quasiconformal extension to $\CC$;
\item[(iii)] $\Lambda[(g_{t})]$ consists of a single point in $\closure{\D}$.
\end{enumerate}
\end{cor}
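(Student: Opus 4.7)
The plan is to reduce the corollary to Theorem \ref{main01} by the choice $p \equiv 1$ for the Loewner chain side, and then to extract the three assertions from the resulting quasiconformal homeomorphism $\Phi$.

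With $p \equiv 1$, the Berkson-Porta data $(1, \tau) \in \BP$ determines a range-normalized Loewner chain $(f_{t}) \in \LC_{0}$ with $\Omega[(f_{t})] = \C$ by Lemma \ref{(1,tau)}(1). The inequality \eqref{maininequality} then reads $|1 - \closure{q(z,t)}| \le k|1 + q(z,t)|$, which coincides with our hypothesis since $|1 - \closure{q}| = |\closure{1 - \closure{q}}| = |1 - q|$. Moreover $T = \infty$ because $p(\D, t) = \{1\} \not\subset i\R$. Thus Theorem \ref{main01} produces a $k$-quasiconformal homeomorphism
\begin{equation*}
\Phi : \Delta[(g_{t})] \to \C
\end{equation*}
with $\Phi|_{\D} = f_{0}$ and the boundary identity $\Phi(1/\closure{g_{t}(\zeta)}) = f_{t}(\zeta)$ for all $\zeta \in \de \D$ and $t \ge 0$.

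For (iii), I invoke the quasiconformal invariance of conformal type. Since $\Phi$ is a qc-homeomorphism onto $\C$, the open set $\Delta[(g_{t})]$ is simply connected (being homeomorphic to $\C$) and parabolic (being qc-equivalent to the parabolic surface $\C$). By the uniformization theorem, a simply connected parabolic open subset of $\CC$ is conformally $\C$, hence of the form $\CC \setminus \{w_{0}\}$ for a single point $w_{0}$. Applying the anti-conformal involution $\iota(w) := 1/\closure{w}$ of $\CC$, one obtains $\Lambda[(g_{t})] = \iota(\CC \setminus \Delta[(g_{t})]) = \{\iota(w_{0})\}$, a single point, which lies in $\closure{g_{0}(\D)} = \closure{\D}$.

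For (i), write $\Lambda[(g_{t})] = \{a\}$ from (iii), so that $\Delta[(g_{t})] = \CC \setminus \{1/\closure{a}\}$ and $\Phi$ extends to a $k$-qc homeomorphism $\CC \to \CC$ by sending $1/\closure{a}$ to $\infty$. Fix $t \ge 0$ and define
\begin{equation*}
\tilde{g}_{t}(z) :=
\begin{cases}
g_{t}(z) & \text{if } z \in \closure{\D},\\
\iota \circ \Phi^{-1} \circ f_{t} \circ \iota(z) & \text{if } z \in \CC \setminus \D.
\end{cases}
\end{equation*}
On $\de\D$ the map $\iota$ is the identity, and the boundary identity yields $\iota(\Phi^{-1}(f_{t}(\zeta))) = \iota(1/\closure{g_{t}(\zeta)}) = g_{t}(\zeta)$, so $\tilde{g}_{t}$ is continuous on $\CC$. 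Each of $\iota$ (anti-conformal) and $f_{t}$ (conformal) has trivial dilatation, while $\Phi^{-1}$ has maximal dilatation $K = (1+k)/(1-k)$, so the composition has maximal dilatation $K$; and the two sense-reversals contributed by $\iota$ combine to give a sense-preserving map. Hence $\tilde{g}_{t}$ is $k$-qc on $\CC \setminus \de\D$, and the standard quasiconformal removability of analytic arcs promotes this to all of $\CC$.

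For (ii), I reduce to (i) by time translation. Fix $s \ge 0$ and set $\tilde{g}_{a} := \omega_{s, s+a}$ for $a \ge 0$. Then $(\tilde{g}_{a}) \in \DLC$: the axiom REF2 gives the monotonicity $\tilde{g}_{b}(\D) \subset \tilde{g}_{a}(\D)$ for $0 \le a \le b$, and $\tilde{g}_{0} = \omega_{s,s} = \mathsf{id}$. Its Berkson-Porta data is the time-shifted pair $(q(z, s+a), \tau(s+a))$, which inherits the hypothesis of the corollary. Part (i), already established, applied to $(\tilde{g}_{a})$ at $a = t - s$ then produces a $k$-qc extension of $\tilde{g}_{t-s} = g_{s}^{-1} \circ g_{t} = \omega_{s,t}$.

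The main obstacle is the conformal-type argument underlying (iii): extracting the one-point structure of $\Lambda[(g_{t})]$ from nothing more than a qc-homeomorphism $\Delta[(g_{t})] \to \C$, which rests on the qc-invariance of the parabolic/hyperbolic dichotomy together with uniformization. The remaining parts are essentially bookkeeping around Theorem \ref{main01} applied with $p \equiv 1$; the one technicality in (i) is the careful tracking of sense-preservation through the two anti-holomorphic reflections $\iota$ at the ends of the composition, which rests on the elementary fact that two sense-reversals combine to a sense-preserver.
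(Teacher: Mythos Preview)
Your approach is exactly the one the paper indicates (``choosing $(f_t)$ as $p\equiv 1$''), and your argument for (iii) via qc-invariance of conformal type is the natural way to flesh out the dual of the paper's proof of Theorem~\ref{openingthm}(iii); parts (ii) and (iii) are correct as written.

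There is, however, one small gap in (i). The ``standard quasiconformal removability of analytic arcs'' you invoke requires $\tilde g_t$ to be a \emph{homeomorphism} of $\CC$ a priori; you have verified continuity and the dilatation bound off $\de\D$, but not injectivity on $\de\D$. That injectivity is equivalent to $g_t|_{\de\D}$ being injective, i.e.\ to $\de g_t(\D)$ being a Jordan curve, and this does not follow from the mere continuous extension supplied by Theorem~\ref{main01}. The easy fix is to note that since $p\equiv 1$ satisfies $|(p-1)/(p+1)|=0$, Theorem~\ref{openingthm}(i) applied to $(f_t)$ with $k=0$ shows each $f_t$ is (the restriction of) a M\"obius transformation; then $f_t|_{\de\D}$ is injective, and the boundary identity $g_t(\zeta)=\iota\circ\Phi^{-1}\circ f_t(\zeta)$ transfers this to $g_t|_{\de\D}$. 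With that, $\tilde g_t$ is a genuine homeomorphism of $\CC$ and your removability step goes through.
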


\subsection{Loewner Range}
We close the paper with some considerations on the Loewner range $\Omega[(f_{t})]$.
Under some restriction on $\tau$, one can obtain the same conclusion as Theorem \ref{openingthm} (iii) with a weaker assumption on the Herglotz function $p$.

\begin{lem}
\label{loewnerrange}
Let $(f_t) \in \LC$ and $(p, \tau) \in \BP$ associated with $(f_t)$.
Suppose that $\tau \in \closure{\D}$ is a constant, and there exist uniform constants $C_{1}, C_{2} > 0$ such that 
\begin{equation}
\label{loewnerrange-condition}
C_{1}< \Re p(z,t) < C_{2}
\end{equation}
for all $z \in \D$ and almost all $t \in [0,\infty)$.
Then $\Omega[(f_t)] = \C$.
\end{lem}

\begin{proof}
Suppose firstly $\tau \in \DW$ is an internal fixed point of $\D$.
In this case we may assume that $\tau = 0$.
Let $(\varphi_{s,t}) \in \EF$ associated with $(f_t)$.
Then in view of $f_t{'}(0) = 1/\varphi_{0,t}'(0)$, Koebe's 1/4-Theorem shows that $f_t(\D)$ contains a disk radius $1/(4|\varphi_{0,t} '(0)|)$.
Hence what we need to prove is $\lim_{t \to \infty}|\varphi_{0,t}'(0)| = 0$.
Since $\tau=0$, $(\varphi_{s,t})$ satisfies
$$
\dot{\varphi}_{s,t}(z) = -\varphi_{s,t}(z) p(\varphi_{s,t}(z),t).
$$
Then calculations show that
\begin{eqnarray*}
&&
\frac{\dot{\varphi}_{0,t}(z)}{\varphi_{0,t}(z)} =- p(\varphi_{0,t}(z),t)\\
&\Longrightarrow&
\log \frac{\varphi_{0,t}(z)}{z} =- \int_0^t p(\varphi_{0,u}(z),u) du\\
&\Longrightarrow&
\Re\log \varphi'_{0,t}(0) = -\int_{0}^t \Re p(0,u)du  < -C_1 t.\\
\end{eqnarray*}
It shows that $|\varphi_{0,t}'(0)| \to 0$ as $t \to \infty$.

The case when $\tau \in \DW$ is a boundary fixed point of $\D$ is verified in \cite[Proposition 3.7]{HottaGumenyuk}.
\end{proof}

A direct corollary of Lemma \ref{loewnerrange} is that the same conclusion holds if $\tau$ is a step function in $\closure{\D}$.
Hence, one may expect that condition \eqref{loewnerrange-condition} is sufficient to obtain $\Omega[(f_{t})] = \C$ for any $\tau \in \DW$.
However in \cite{GumenyukPrause:2018}, an example of $(p, \tau) \in \BP$ is provided that $p$ satisfies \eqref{loewnerrange-condition} but $\Omega[(f_{t})] \neq \C$ where $(f_{t}) \in \LC$ associated with the $(p, \tau)$.

\bibliographystyle{amsalpha}
\def\cprime{$'$}
\providecommand{\bysame}{\leavevmode\hbox to3em{\hrulefill}\thinspace}
\providecommand{\MR}{\relax\ifhmode\unskip\space\fi MR }
\providecommand{\MRhref}[2]{%
  \href{http://www.ams.org/mathscinet-getitem?mr=#1}{#2}
}
\providecommand{\href}[2]{#2}

\end{document}